%
%
%
%
\documentclass[12pt]{amsart}
\usepackage{amssymb,latexsym}
\usepackage{amsfonts}
\usepackage{amsmath}
\usepackage[colorlinks,linkcolor=blue,anchorcolor=blue,citecolor=blue]{hyperref}
\usepackage{algorithm}
\usepackage{enumerate}
\usepackage{algpseudocode}
\usepackage{verbatim}
\usepackage{graphicx}

\newcommand{\tr}{{\rm Tr}}

\newcommand{\gZ}{{\mathrm Z}}

\newcommand{\mM}{{\mathbf M}}

\newcommand{\C}{{\mathbb C}}

\newcommand{\F}{{\mathbb F}}
\newcommand{\gN}{\mathrm{N}}

\newtheorem{theorem}{Theorem}[section]
\newtheorem{lemma}[theorem]{Lemma}

\theoremstyle{definition}
\newtheorem{definition}[theorem]{Definition}
\newtheorem{example}[theorem]{Example}

\newtheorem{corollary}[theorem]{Corollary}

\theoremstyle{remark}
\newtheorem{remark}[theorem]{Remark}

\numberwithin{equation}{section}



\begin{document}

\title{A new criterion on $k$-normal elements over finite fields}

\author{Aixian Zhang}
\address{Department of Mathematical Sciences, Xi'an  University of Technology,
Shanxi, 710054, China.}
\email{zhangaixian1008@126.com}

\author{Keqin Feng}
\address{Department of Mathematical
Sciences, Tsinghua University,
 Beijing, 100084, China. }
\email{kfeng@math.tsinghua.edu.cn}

\subjclass[2010]{11T06, 11T55}



\keywords{Normal basis, finite field, idempotent, Linearized polynomial, Gauss period.}

\begin{abstract}
The notion of normal elements for finite fields extension has been generalized as $k$-normal
elements by Huczynska et al. [3]. The number of $k$-normal elements for a fixed finite
field extension has been calculated and estimated [3], and several methods to construct
$k$-normal elements have been presented [1,3].
Several criteria on $k$-normal element have been given [1,2]. In this paper we present a new criterion
on $k$-normal elements by using idempotents and show some examples. Such criterion has been given for usual normal element
before [6].
\end{abstract}

\maketitle



\section{Introduction}\label{sec-one}
Let $q=p^m$ where $p$ is a prime number, $m \geq 1, \ \F_q$ be the finite field
with $q$ elements, $\F^{\times}_q=\F_q \backslash \{0\}.$ For $ n \geq 1$ and
$Q=q^n, \alpha \in \F^{\times}_Q$ is called a normal element for extension
$\F_Q / \F_q$ if $\gN=\{ \alpha, \alpha^q, \alpha^{q^{2}}, \cdots, \alpha^{q^{n-1}}\}$
is a basis of $\F_Q  $ over $\F_q$ ( $\gN$ is called a normal basis for $\F_Q / \F_q$).
For a normal element $\alpha$ of $\F_Q / \F_q ,$ the minimal polynomial $f_{\alpha}(x) \in \F_q[x]$
of $\alpha$ over $\F_q$ is called a normal polynomial for $\F_Q / \F_q ,$ which is a monic
irreducible polynomial in $\F_q[x] $ with degree $n.$ Normal bases have many applications
including coding theory, cryptography and communication theory (\cite{LN,MP}). It is proved that
$\alpha \in \F^{\times}_Q$ is a normal element for $\F_Q / \F_q $ if and only if
\begin{equation}\label{equ-gcd}
\gcd(g_{\alpha}(x), x^n -1)=1, \quad  g_{\alpha}(x)=\sum^{n-1}_{i=0}\alpha^{q^i}x^{n-i-1}
\end{equation}
see (\cite{LN},Theorem 2.39).

The following definition given by Huczynska et al. \cite{HMPT} is a generalization of normal
elements.

\begin{definition}(see \cite{HMPT}.)
Let $q=p^m,Q=q^n$ and $0 \leq k \leq  n-1.$ An element $\alpha \in \F^{\times}_Q$
is called a $k$-normal element for $\F_Q / \F_q $ if the degree of $\gcd(g_{\alpha}(x), x^n -1)$
is $k$.
\end{definition}

With this terminology, a normal element is just $0$-normal. As shown in the normal
element case \cite{MP}, the $k$-normal elements can be used to reduce the multiplication
process in finite fields \cite{Negre}.

The number of $k$-normal elements for extension $\F_Q / \F_q $ has been calculated and
estimated in \cite{HMPT} and several methods to construct $k$-normal elements have been
presented in \cite{AM} and \cite{AT}. As the normal element case, the $k$-normal elements
can be characterized by using $q$-linearlized
polynomial theory [2,3]. Now we briefly introduce such characterization.

A $q$-linearlized polynomial ($q$-polynomial in brief) is a polynomial in the following form
$$L(x)=a_0 x +a_1 x^q + \cdots +a_m x^{q^{m}} \ \ \ (a_i \in \F_{q}).$$
Let $\mathcal{F}_q[x]$ be the set of all $q$-polynomials. Then $\mathcal{F}_q[x]$ is a ring with
respect to the ordinary addition and the following multiplication $\otimes$ :
$$
L(x) \otimes K(x)=L(K(x)) \ \ \ (\mbox{composition}).
$$
One of basic facts on $\mathcal{F}_q[x]$ is that the mapping
\begin{equation}
\varphi:\mathbb{F}_{q}[x] \longrightarrow  \mathcal{F}_q[x], \quad
\ \sum^m_{i=0}a_ix^i \mapsto \sum^m_{i=0}a_ix^{q^i} \ (a_i \in \mathbb{F}_{q})
\end{equation}
is an isomorphism of rings. Therefore $\mathcal{F}_q[x]$ is a principal ideal domain with identity $x.$
We use the notation $\parallel$
to express the divisibility in $\mathcal{F}_q[x]$. Namely, for $L(x)$
and $M(x)$ in $\mathcal{F}_q[x]$, $L(x) \parallel M(x)$ means that
$L(x) \neq 0$ and there exists $N(x) \in \mathcal{F}_q[x]$ such that $M(x)=L(x) \otimes N(x)=N(x) \otimes L(x).$

Let $n \geq 1, \alpha \in \mathbb{F}^{\times}_{Q}.$ The set
$$
I_\alpha=\{M(x) \in \mathcal{F}_q[x]: M(\alpha)=0\}
$$
is a nonzero ideal of $\mathcal{F}_q[x]$ since $x^{q^n}-x \in I_\alpha.$
The monic generator $M_\alpha(x)$ of the ideal $I_\alpha$
is called the minimal $q$-polynomial of $\alpha.$ Particularly, $M_\alpha(x)$ is an irreducible polynomial
in $\mathcal{F}_q[x]$ and $M_\alpha(x) \parallel x^{q^n}-x .$ Moreover for any
$L(x) \in \mathcal{F}_q[x], \ L(\alpha)=0$
if and only if $M_\alpha(x) \parallel L(x).$

\begin{lemma}([3, Theorem 3.2])\label{lem-equ}
Let $q=p^m, Q=q^n$ and $0 \leq k\leq n-1.$ The following statements for $\alpha \in \F^{\times}_{Q}$
are equivalent to each other

(I) $\alpha$ is a $k$-normal element for $\F_{Q}  / \F_{q};$

(II) The degree of the minimal $q$-polynomial $M_\alpha(x) \in \mathcal{F}_q[x]$ over $\F_{q}$ is $q^{n-k};$
\end{lemma}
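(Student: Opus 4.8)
The plan is to transport everything into the polynomial ring $\F_q[x]$ through the isomorphism $\varphi$ and to identify the minimal $q$-polynomial $M_\alpha$ with the classical ``order polynomial'' $(x^n-1)/\gcd(g_\alpha(x),x^n-1)$. Set $m_\alpha(x)=\varphi^{-1}(M_\alpha(x))\in\F_q[x]$; since $\varphi$ sends $\sum a_ix^i$ to $\sum a_ix^{q^i}$, we have $\deg M_\alpha=q^{\deg m_\alpha}$, so statement (II) is equivalent to $\deg m_\alpha=n-k$. Because $\varphi$ is a ring isomorphism and $I_\alpha=(M_\alpha)$, the set $\{B(x)=\sum_j b_jx^j\in\F_q[x]:\ \sum_j b_j\alpha^{q^j}=0\}$ is exactly the ideal $(m_\alpha(x))$ of $\F_q[x]$. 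Hence it suffices to show that this ideal equals $\big((x^n-1)/\gcd(g_\alpha(x),x^n-1)\big)$, for then $\deg m_\alpha=n-\deg\gcd(g_\alpha(x),x^n-1)$, which is precisely the equivalence of (I) and (II).

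The bridge between the ``$\F_q$-coefficient'' object $m_\alpha$ and the ``$\F_Q$-coefficient'' object $g_\alpha(x)=\sum_{i=0}^{n-1}\alpha^{q^i}x^{n-1-i}$ is a short computation modulo $x^n-1$. For $B(x)=\sum_{j=0}^{n-1}b_jx^j\in\F_q[x]$ one checks, by collecting the coefficient of each power of $x$ in $B(x)g_\alpha(x)$, reducing exponents mod $n$, and using $b_j^{q}=b_j$ together with $\alpha^{q^n}=\alpha$, that
\[
B(x)\,g_\alpha(x)\equiv \sum_{t=0}^{n-1}\Big(\sum_{j=0}^{n-1}b_j\alpha^{q^j}\Big)^{q^t}x^{n-1-t}\pmod{x^n-1}.
\]
Therefore $\sum_j b_j\alpha^{q^j}=0$ if and only if $(x^n-1)\mid B(x)g_\alpha(x)$ in $\F_Q[x]$, and consequently the ideal $(m_\alpha(x))$ coincides with $\{B\in\F_q[x]:(x^n-1)\mid B\,g_\alpha\}$.

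It then remains to evaluate this last set. First I would note that $\delta(x):=\gcd(g_\alpha(x),x^n-1)$, a priori only an element of $\F_Q[x]$, actually lies in $\F_q[x]$: applying the Frobenius $\sigma\colon c\mapsto c^q$ to coefficients and using the identity $\sigma(g_\alpha)(x)=x\,g_\alpha(x)-\alpha(x^n-1)$ together with $\gcd(x,x^n-1)=1$ gives $\sigma(\delta)=\delta$. Writing $x^n-1=\delta(x)\,u(x)$ with $u\in\F_q[x]$ monic, the coprimality of $u$ and $g_\alpha/\delta$ in $\F_Q[x]$ shows that, for $B\in\F_q[x]$, the relation $(x^n-1)\mid B\,g_\alpha$ is equivalent to $u(x)\mid B(x)$, first in $\F_Q[x]$ and hence in $\F_q[x]$. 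Thus $(m_\alpha(x))=(u(x))$, and as both generators are monic, $m_\alpha(x)=u(x)=(x^n-1)/\gcd(g_\alpha(x),x^n-1)$, so $\deg m_\alpha=n-\deg\gcd(g_\alpha(x),x^n-1)$, as desired.

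The main obstacle is the second paragraph: matching the minimal $q$-polynomial, whose coefficients lie in $\F_q$, against $g_\alpha$, whose coefficients lie in $\F_Q$. The congruence displayed above is what makes the two worlds communicate; everything else is bookkeeping with $\varphi$ and an easy Galois-descent step for $\delta(x)$. One should also record the boundary checks $g_\alpha\neq 0$ and $M_\alpha\neq x$ (both following from $\alpha\neq 0$), which keep the degrees within the stated range $0\le k\le n-1$.
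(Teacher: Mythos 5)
Your proof is correct. The paper itself gives no proof of this lemma --- it is quoted from [3, Theorem 3.2] --- and your argument is essentially the standard one behind that result: identify $\varphi^{-1}(M_\alpha)$ with the ``$\F_q$-order'' $(x^n-1)/\gcd(g_\alpha(x),x^n-1)$ via the congruence $B(x)g_\alpha(x)\equiv\sum_{t=0}^{n-1}\bigl(\sum_j b_j\alpha^{q^j}\bigr)^{q^t}x^{n-1-t}\pmod{x^n-1}$, together with the Galois-descent observation that $\gcd(g_\alpha,x^n-1)$ has coefficients in $\F_q$. All the steps check out, including the key congruence, the identity $\sigma(g_\alpha)(x)=x\,g_\alpha(x)-\alpha(x^n-1)$, and the coprimality argument reducing $(x^n-1)\mid B g_\alpha$ to $u\mid B$; the only bookkeeping worth making explicit is that both conditions depend only on $B$ modulo $x^n-1$, so restricting to $\deg B<n$ is harmless.
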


(III) The dimension of the $\F_{q}$-vector subspace $V_{\alpha}$ of $\F_{Q}$ spanned by
$\{ \alpha, \alpha^{q},\ldots,\alpha^{q^{n-1}}\}$ is $n-k$ and $\{ \alpha, \alpha^{q},\ldots,\alpha^{q^{n-k-1}}\}$
is a $\F_{q}$-basis of $V_{\alpha}$.

Let $n=p^{t}n^{\prime}, p \nmid n^{\prime}.$ Then $x^n -1$ is decomposed in $\F_{q}[x]$ as
\begin{equation}\label{eqn-qdecom}
x^n -1=(x^{n^{\prime}} -1)^{p^{t}}=(p_1(x)p_2(x) \cdots p_s(x))^{p^{t}},
\end{equation}
where $p_i(x) \ (1 \leq i \leq s) $ are distinct monic irreducible polynomials in $\F_{q}[x]$.
By the isomorphism $\varphi$ in (2), $x^{q^n}-x$ has the following corresponding decomposition in $\mathcal{F}_q[x]:$
$$x^{q^n}-x=(P_1(x) \otimes P_2(x) \otimes \cdots  \otimes P_s(x))^{p^{t}},$$
where $P_i(x)=\varphi(p_i(x))\ (1 \leq i \leq s) $ are distinct monic irreducible $q$-polynomials in $\mathcal{F}_q[x]$
and for $L(x) \in \mathcal{F}_q[x]$ and $l \geq 1, L(x)^{l}$
means $L(x) \otimes L(x) \otimes \cdots  \otimes L(x)  $ \
($l$ copies).

For $\alpha \in \F^{\times}_{Q},$ the minimal $q$-polynomial $M_\alpha(x)$ is a divisor of $x^{q^n}-x$ in $\mathcal{F}_q[x].$
Therefore $M_\alpha(x)=\varphi(m_\alpha(x))$ for a divisor $ m_\alpha(x)$ of $x^n -1$ in $\F_{q}[x].$ From the definition of
$M_\alpha(x)$ and the isomorphism $\varphi$ between $\F_{q}[x]$ and $\mathcal{F}_q[x]$ we get the following result.

\begin{lemma}
Let $x^n -1$ be decomposed by formula (\ref{eqn-qdecom}) in $\F_q[x], m(x)$ is a monic divisor of $x^n -1$
in $\F_{q}[x].$ Let $M(x)=\varphi(m(x))$ and $M_i(x)=\varphi(\frac{m(x)}{p_i(x)})$ if $p_i(x) | m(x).$
Then $M(x)$ is the minimal $q$-polynomial of $\alpha$ if and only if $M(\alpha)=0$ and
for each $p_i(x)|m(x), M_i(\alpha) \neq 0.$
\end{lemma}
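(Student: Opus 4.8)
The plan is to push everything through the ring isomorphism $\varphi\colon\F_q[x]\to\mathcal{F}_q[x]$ of (2), under which the divisibility relation $\parallel$ in $\mathcal{F}_q[x]$ corresponds to ordinary divisibility in $\F_q[x]$ and $\deg\varphi(f)=q^{\deg f}$, and then to invoke the defining property of the minimal $q$-polynomial recalled just before Lemma~\ref{lem-equ}: for any $L(x)\in\mathcal{F}_q[x]$ one has $L(\alpha)=0$ if and only if $M_\alpha(x)\parallel L(x)$. Write $m_\alpha(x)=\varphi^{-1}(M_\alpha(x))$, a monic divisor of $x^n-1$ in $\F_q[x]$, so that the asserted equality $M(x)=M_\alpha(x)$ is equivalent to $m(x)=m_\alpha(x)$.

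For the forward implication, assume $M(x)=M_\alpha(x)$. Then $M(\alpha)=0$ is immediate from the definition of $M_\alpha$. Fix an index $i$ with $p_i(x)\mid m(x)$. Then $m(x)/p_i(x)$ is a proper divisor of $m(x)=m_\alpha(x)$, so $m_\alpha(x)\nmid m(x)/p_i(x)$ (a proper divisor has strictly smaller degree), i.e.\ $M_\alpha(x)\nparallel M_i(x)$; hence $M_i(\alpha)\neq 0$.

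For the reverse implication, assume $M(\alpha)=0$ and $M_i(\alpha)\neq 0$ for every $i$ with $p_i(x)\mid m(x)$. From $M(\alpha)=0$ we get $M_\alpha(x)\parallel M(x)$, i.e.\ $m_\alpha(x)\mid m(x)$. Suppose, for contradiction, that $m_\alpha(x)\neq m(x)$. Using the decomposition $x^n-1=(p_1(x)\cdots p_s(x))^{p^t}$ from (\ref{eqn-qdecom}), write $m(x)=\prod_{i=1}^s p_i(x)^{e_i}$ and $m_\alpha(x)=\prod_{i=1}^s p_i(x)^{f_i}$ with $0\le f_i\le e_i\le p^t$. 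Since $m_\alpha(x)\neq m(x)$ there is an index $j$ with $f_j<e_j$; in particular $e_j\ge 1$, so $p_j(x)\mid m(x)$. Because $f_j\le e_j-1$ and $f_i\le e_i$ for $i\neq j$, we obtain $m_\alpha(x)\mid m(x)/p_j(x)$, i.e.\ $M_\alpha(x)\parallel M_j(x)$, whence $M_j(\alpha)=0$ — contradicting the hypothesis. Therefore $m_\alpha(x)=m(x)$, that is, $M(x)=M_\alpha(x)$.

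The only point requiring a little care — rather than a genuine obstacle — is that when $p\mid n$ the polynomial $x^n-1$ is not squarefree, so one must keep track of the exponents $e_i,f_i$ in the final step and not merely of which irreducible factors $p_i(x)$ appear; the exponent bookkeeping above handles this. Everything else is a formal consequence of $\varphi$ being a degree-scaling ring isomorphism and of the universal property of $M_\alpha(x)$.
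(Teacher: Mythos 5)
Your proof is correct and follows exactly the route the paper intends: the paper states this lemma with no written proof, merely remarking that it follows "from the definition of $M_\alpha(x)$ and the isomorphism $\varphi$," and your argument is a careful fleshing-out of precisely that — transporting divisibility through $\varphi$ and using the universal property $L(\alpha)=0\Leftrightarrow M_\alpha(x)\parallel L(x)$. Your explicit exponent bookkeeping for the non-squarefree case $p\mid n$ is a welcome addition that the paper glosses over.
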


Particularly, if $\gcd(n,p)=1,$ then the decomposition  (\ref{eqn-qdecom}) becomes
\begin{equation}\label{eqn-decom}
x^n -1=p_1(x)p_2(x) \cdots p_s(x).
\end{equation}

For $\alpha \in \F^{\times}_{Q},$ the minimal $q$-polynomial $M_\alpha(x)$ has the form
$$
M_\alpha(x)=M_{\Delta}(x)=\otimes_{i \in \Delta}P_i(x)
$$
where $\Delta$ is a subset of $\{ 1,2,\cdots,s\}.$ In this case, $M_\alpha(x)$ can be described
by the following way.

\begin{lemma}\label{lem-cri}
Suppose that $Q=q^n, (n,q)=1$ and $x^n -1$ has decomposition formula (\ref{eqn-decom}) where $p_i(x) \ (1 \leq i \leq s)$
are distinct monic irreducible polynomials in $\F_q[x].$ Let
$$m_i(x)=\frac{x^n -1}{p_i(x)}, \quad M_i(x)=\varphi(m_i(x)) \ (1 \leq i \leq s).$$
For $\alpha \in \F^{\times}_{Q},$ let
$$\Delta=\Delta(\alpha)=\{ i: 1 \leq i \leq s, M_i(\alpha) \neq 0 \}.$$
Then the minimal $q$-polynomial $M_\alpha(x)$ of $\alpha$ is $M_{\Delta}(x)=\bigotimes_{i \in \Delta}P_i(x)$
and $\alpha$ is a $k$-normal element for $\F_{Q}/ \F_{q}$ where $k=n-\sum_{i \in \Delta }\deg p_i(x).$
\end{lemma}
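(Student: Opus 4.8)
The plan is to use the factorization $x^n-1=\prod_{i=1}^s p_i(x)$ and the correspondence $\varphi$ to reduce everything to divisibility in $\mathcal{F}_q[x]$, where we can invoke Lemma~\ref{lem-equ} to pass between the degree of $M_\alpha(x)$ and the value $k$. First I would observe that, since the $p_i(x)$ are pairwise coprime and together multiply to $x^n-1$, for any subset $\Delta\subseteq\{1,\dots,s\}$ we have $m_\Delta(x):=\prod_{i\in\Delta}p_i(x)\mid x^n-1$, hence $M_\Delta(x)=\varphi(m_\Delta(x))$ is a $q$-polynomial dividing $x^{q^n}-x$ in $\mathcal{F}_q[x]$; moreover for $i\notin\Delta$ we have $p_i(x)\nmid m_\Delta(x)$, while for $i\in\Delta$ we have $p_i(x)\mid m_\Delta(x)$ and $m_\Delta(x)/p_i(x)\mid m_i(x)=\tfrac{x^n-1}{p_i(x)}$.

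Next I would identify $M_\alpha(x)$ via Lemma 1.6 (the lemma immediately preceding, characterizing when $M(x)$ is the minimal $q$-polynomial of $\alpha$). Writing $M_\alpha(x)=\varphi(m_\alpha(x))$ with $m_\alpha(x)\mid x^n-1$, the squarefree factorization forces $m_\alpha(x)=\prod_{i\in S}p_i(x)$ for some $S\subseteq\{1,\dots,s\}$. I claim $S=\Delta(\alpha)$. For the inclusion $\Delta\subseteq S$: if $i\notin S$, then $p_i(x)\nmid m_\alpha(x)$, so $m_\alpha(x)\mid m_i(x)$, hence $M_\alpha(x)\parallel M_i(x)$ in $\mathcal{F}_q[x]$; since $M_\alpha(\alpha)=0$ this gives $M_i(\alpha)=0$, i.e.\ $i\notin\Delta$. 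For $S\subseteq\Delta$: if $i\in S$, then by Lemma 1.6 applied with $M_i'(x)=\varphi(m_\alpha(x)/p_i(x))$ we get $M_i'(\alpha)\neq0$; but $m_\alpha(x)/p_i(x)\mid m_i(x)$, so $M_i'(x)\parallel M_i(x)$, say $M_i(x)=N(x)\otimes M_i'(x)$, whence $M_i(\alpha)=N(M_i'(\alpha))$. To conclude $M_i(\alpha)\neq0$ I need that $N(x)$ does not kill $M_i'(\alpha)$; this holds because $m_i(x)/(m_\alpha(x)/p_i(x))$ is coprime to $p_i(x)$ and more importantly $\alpha$ has minimal $q$-polynomial $M_\alpha(x)$, so $L(\alpha)=0\iff M_\alpha(x)\parallel L(x)$, and $M_\alpha(x)\nmid N(x)\otimes(\text{anything not in }I_\alpha)$ unless forced — the clean way is: $M_i(\alpha)=0$ would mean $M_\alpha(x)\parallel M_i(x)=\varphi(m_i(x))$, i.e.\ $m_\alpha(x)\mid m_i(x)=\tfrac{x^n-1}{p_i(x)}$, contradicting $p_i(x)\mid m_\alpha(x)$. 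So $M_i(\alpha)\neq0$ and $i\in\Delta$. Hence $m_\alpha(x)=\prod_{i\in\Delta}p_i(x)=m_\Delta(x)$ and $M_\alpha(x)=M_\Delta(x)$.

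Finally I would compute the normality index. By Lemma~\ref{lem-equ}(II), $\alpha$ is $k$-normal iff $\deg M_\alpha(x)=q^{n-k}$. Since $\varphi$ sends a polynomial of degree $d$ to a $q$-polynomial of degree $q^d$, we have $\deg M_\alpha(x)=q^{\deg m_\alpha(x)}=q^{\sum_{i\in\Delta}\deg p_i(x)}$, so $n-k=\sum_{i\in\Delta}\deg p_i(x)$, i.e.\ $k=n-\sum_{i\in\Delta}\deg p_i(x)$, as claimed.

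The main obstacle is the second paragraph: pinning down $S=\Delta(\alpha)$ exactly, and in particular verifying the direction $S\subseteq\Delta$ cleanly. The subtlety is that $M_i(\alpha)\neq0$ must be deduced from the minimality of $M_\alpha(x)$ rather than from a naive composition argument; the key identity $M_\alpha(x)\parallel M_i(x)\iff m_\alpha(x)\mid m_i(x)$ (which follows from $\varphi$ being a ring isomorphism) is what makes it go through, and one should state it explicitly. Everything else — the squarefree structure of $x^n-1$ when $(n,q)=1$, and the degree bookkeeping through $\varphi$ — is routine.
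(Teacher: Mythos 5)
Your proof is correct and follows essentially the same route as the paper: both arguments reduce to the squarefree factorization of $x^{q^n}-x$ in $\mathcal{F}_q[x]$ and the key equivalence $M_i(\alpha)\neq 0 \iff M_\alpha(x)\nparallel M_i(x) \iff P_i(x)\parallel M_\alpha(x)$, followed by the same degree bookkeeping through $\varphi$ and Lemma~\ref{lem-equ}. The only difference is presentational — you introduce the index set $S$ and prove $S=\Delta$ by double inclusion (with a detour through a composition argument you rightly abandon for the ``clean way''), whereas the paper states the chain of equivalences directly.
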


\begin{proof}
For each $i,1 \leq i \leq s$,
\begin{eqnarray*}
P_i(x) \| M_{\alpha}(x) & \Longleftrightarrow & M_{\alpha}(x) \nparallel \frac{x^{q^n}-x}{P_i(x)}
=M_i(x) \in \mathcal{F}_q[x] (\ \mbox{since} \  x^{q^n}-x=\otimes^{s}_{i=1}P_i(x)) \\
& \Longleftrightarrow & M_i(\alpha) \neq 0  \Longleftrightarrow  i \in \Delta.
\end{eqnarray*}
Therefore $M_{\alpha}(x)=\prod\limits_{i \in \Delta}P_i(x).$ Since
$\deg M_{\alpha}(x)=\prod\limits_{i \in \Delta} \deg P_i(x)=q^{\sum_{i \in \Delta} \deg p_i(x)},$
by Lemma \ref{lem-equ} we know that $\alpha$ is a $k$-normal element for extension $\F_{Q}/ \F_{q}$
where $k=n-\sum\limits_{i \in \Delta }\deg p_i(x)=\sum^{s}\limits_{ \scriptstyle i=1 \atop \scriptstyle i \notin \Delta }\deg p_i(x).$
\end{proof}

Lemma \ref{lem-cri} presents a method to determine the normality $k$ and the minimal $q$-polynomial of an
element $\alpha \in \F^{\times}_{Q}$ provided we know the decomposition formula (4) in the $\gcd(n,q)=1$ case.
In this paper we present a new method to determine the normality and the minimal $q$-polynomial $M_{\alpha}(x)$
of $\alpha \in \F^{\times}_{Q},$ essentially by the partition of $\gZ_n=\mathbb{Z}/ n\mathbb{Z}$ into
$q$-classes without using the explicit form of the irreducible factors $p_i(x) \ (1 \leq i \leq s)$ of $x^n -1.$
 We explain this idempotent method in Section \ref{sec-two} and show several examples in Section \ref{sec-three}.

\section{Main result}\label{sec-two}

Let $q=p^m, Q=q^n$ and $\gcd(n,p)=1.$ A criterion on normal element for extension $\F_{Q}/ \F_{q}$ has
been given in [6] by using idempotents in semisimple $\F_{q}$-algebra $A=\F_{q}[x]/(x^n -1).$ In this section
we generalize this method to determine the normality $k$ and  $M_{\alpha}(x)$ of any $\alpha \in  \F^{\times}_{Q}.$

By assumption $\gcd(n,p)=1, x^n -1$ has the decomposition (4) in $\F_{q}[x]:$
$$x^n -1=p_i(x)p_2(x) \cdots p_s(x)$$
where $p_i(x) \ (1 \leq i \leq s)$ are distinct monic irreducible polynomials in $\F_{q}[x].$ Let
$$n_i = \deg p_i(x), l_i(x)=\frac{x^n -1}{p_i(x)}, L_i(x)=\varphi(l_i(x)) \ (1 \leq i \leq s).$$
Then $n_1 + n_2 +\cdots+n_s =n, \deg L_i(x)=q^{n_{i}} \  (1 \leq i \leq s).$ By the Chinese Remainder Theorem,
 $A=\F_{q}[x]/(x^n -1)$ is a direct sum of finite fields:
 $$A \cong \bigoplus^{s}_{i=1}\frac{\F_{q}[x]}{(p_i(x))} \cong  \bigoplus^{s}_{i=1} \F_{Q_{i}} \quad (Q_i=q^{^{n_{i}}}).$$
It is well known that zeros and degree of $P_i(x)$ can be described by $q$-classes of $\mathrm{Z}_n=\mathbb{Z}/ n\mathbb{Z}$.

\begin{definition}
Let $n \geq 2, q=p^m$ and $\gcd(n,p)=1.$ Two elements $a$ and $b$ in
$\mathrm{Z}_n=\mathbb{Z}/ n\mathbb{Z}=\{ 0,1,2,\cdots,n-1\}$ are called $q$-equivalent if there exists a positive
$i \in \mathbb{Z}$ such that $a \equiv bq^i \ (\bmod  n).$

This is an equivalent relation on $\mathrm{Z}_n$
and $\mathrm{Z}_n$ is partited into $q$-equivalent classes.
\begin{eqnarray*}
\mathcal{A}_1 &=& \{a_1=0\},   | \mathcal{A}_1 |=n_1 =1\\
\mathcal{A}_2 &=& \{a_2, a_2 q, \cdots, a_2 q^{n_{2}-1}\}, | \mathcal{A}_2 |=n_2  \\
&&\vdots \\
\mathcal{A}_s &=& \{a_s, a_s q, \cdots, a_s q^{n_s -1} \},| \mathcal{A}_s |=n_s
\end{eqnarray*}
where for $1 \leq i \leq s, n_i$ is the least positive integer such that $a_i q^{n_{i}} \equiv a_i \ (\bmod  n).$
\end{definition}

Let $\alpha$ be a primitive $n$-th root of 1 in the algebraic closure of $\F_{q}.$ Then for each $i,1 \leq i \leq s,$
$$\mathcal{S}_i = \{ \alpha^{\lambda}: \lambda \in \mathcal{A}_i\}=\{ \alpha^{a_i},\alpha^{a_i q},\ldots,\alpha^{a_i q^{n_i -1}}\}$$
are the set of zeros of a monic irreducible polynomial $p_i(x)$ in $\F_{q}[x]$ with degree $n_i.$ And $x^n -1$ is
decomposed in $\F_{q}[x]$ as (4).

Now we introduce the system of orthogonal (minimal) idempotents in ring $A=\F_{q}[x]/(x^n -1).$ Consider the natural
isomorphism of rings
\begin{eqnarray*}
\pi: A & \longrightarrow &\frac{\F_{q}[x]}{(p_1(x))} \oplus \frac{\F_{q}[x]}{(p_2(x))}
\oplus \cdots \oplus \frac{\F_{q}[x]}{(p_s(x))}, \frac{\F_{q}[x]}{(p_i(x))}=\F_{Q_i}, Q_i=q^{n_i} \\
f(x)&=& (f(x)(\bmod  p_1(x)),f(x)(\bmod  p_2(x)),\cdots,f(x)(\bmod  p_s(x))).
\end{eqnarray*}

\begin{definition}
Let $v_1=(1,0,\ldots,0),v_2=(0,1,\ldots,0),\cdots,v_s=(0,0,\ldots,1)$ be elements in
$$\frac{\F_{q}[x]}{(p_1(x))} \oplus \frac{\F_{q}[x]}{(p_2(x))}\oplus \cdots
\oplus \frac{\F_{q}[x]}{(p_s(x))}
=\F_{Q_1} \oplus \F_{Q_2} \oplus \cdots \oplus \F_{Q_s}.$$
Let
$$e_i(x)=\pi^{-1}(v_i) \ (1 \leq i \leq s).$$
Namely, $e_i(x) \ (1 \leq i \leq s)$ are determined by
\begin{equation}\label{eqn-cong}
e_i(x) \equiv \delta_{ij} \ (\bmod  p_j(x)) \ (1 \leq j \leq s),
\end{equation}
where $\delta_{ij}=1$ for $i=j$ and $\delta_{ij}=0$ otherwise. $\{ e_i(x) \ (1 \leq i \leq s)\}$
is called the system of orthogonal (minimal) idempotents of $A,$ since the following relationships hold
$$
e_i(x)e_j(x)=\delta_{ij} e_i(x), \  \sum^{s}_{i=1}e_i(x)=1 \quad \ (1 \leq i,j \leq s).
$$
\end{definition}

Now we present our main result which shows that the minimum $q$-polynomial and the normality of
$\alpha \in \F^{\times}_{Q}$ can be determined by using $e_i(x) \ (1 \leq i \leq s).$

\begin{theorem}\label{thm-main}
Let $q=p^m, Q=q^n$ and $\gcd (n,p)=1.$ Let $x^n -1$ be decomposed as
$x^n -1=p_i(x)p_2(x) \cdots p_s(x)$  in $\F_{q}[x]$ and the idempotents
$\{e_i(x) \ (1 \leq i \leq s)\}$ be defined by congruence equation (\ref{eqn-cong}).
Let $E_i(x)=\varphi(e_i(x))$ and $P_i(x)=\varphi(p_i(x)) \ (1 \leq i \leq n).$
For any $\alpha \in \F^{\times}_{Q},$ let
$$
\Delta=\Delta(\alpha)=\{ i: 1 \leq i \leq s,\  E_i(\alpha) \neq 0 \}.
$$
Then the minimal $q$-polynomial of $\alpha$ is $M_{\Delta}(x)=\bigotimes_{i \in \Delta}P_i(x)$
and $\alpha$ is a $k$-normal element for extension $\F_{Q}/ \F_{q}$ where $k$ (the normality of $\alpha$)
is given by $k=n-\sum_{i \in \Delta}\deg p_i(x).$
\end{theorem}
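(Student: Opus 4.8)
The plan is to reduce the whole statement to the defining divisibility property of the minimal $q$-polynomial recalled just before Lemma \ref{lem-equ}: for $L(x)\in\mathcal{F}_q[x]$ one has $L(\alpha)=0$ iff $M_\alpha(x)\parallel L(x)$. Transporting this back to $\F_q[x]$ through the ring isomorphism $\varphi$ of (2) (which preserves divisibility and turns ordinary products into $\otimes$-products), and writing $m_\alpha(x)=\varphi^{-1}(M_\alpha(x))$, the condition $E_i(\alpha)\neq 0$ becomes the condition $m_\alpha(x)\nmid e_i(x)$ in $\F_q[x]$. So the first step I would carry out is to determine $\gcd(e_i(x),x^n-1)$. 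From the congruences $e_i(x)\equiv\delta_{ij}\ (\bmod\ p_j(x))$ in (\ref{eqn-cong}) one reads off that, among the irreducible factors $p_1(x),\dots,p_s(x)$ of $x^n-1$, precisely those with $j\neq i$ divide $e_i(x)$; since $\gcd(n,p)=1$ makes $x^n-1$ squarefree, this gives $\gcd(e_i(x),x^n-1)=\prod_{j\neq i}p_j(x)=l_i(x)$.

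Next I would assemble the chain of equivalences. Because $M_\alpha(x)\parallel x^{q^n}-x$, the polynomial $m_\alpha(x)$ is a squarefree divisor of $x^n-1$, say $m_\alpha(x)=\prod_{i\in S}p_i(x)$, and accordingly $M_\alpha(x)=\bigotimes_{i\in S}P_i(x)$. Then, for each $i$,
$$E_i(\alpha)=0\ \Longleftrightarrow\ M_\alpha(x)\parallel E_i(x)\ \Longleftrightarrow\ m_\alpha(x)\mid e_i(x)\ \Longleftrightarrow\ m_\alpha(x)\mid\gcd(e_i(x),x^n-1)=l_i(x)\ \Longleftrightarrow\ p_i(x)\nmid m_\alpha(x)\ \Longleftrightarrow\ i\notin S,$$
where the third equivalence uses $m_\alpha(x)\mid x^n-1$ and the fifth uses that $m_\alpha$ is squarefree with $p_i$ one of the irreducible factors of $x^n-1$. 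Hence $\Delta(\alpha)=\{i:E_i(\alpha)\neq 0\}=S$, so $M_\alpha(x)=\bigotimes_{i\in\Delta}P_i(x)=M_\Delta(x)$, the asserted minimal $q$-polynomial. (Alternatively, one could observe that $\Delta(\alpha)$ here coincides with the index set of Lemma \ref{lem-cri}, since there $M_i=\varphi(l_i)=L_i$ and the gcd identity forces $E_i(\alpha)=0\Leftrightarrow L_i(\alpha)=0$, and then simply quote Lemma \ref{lem-cri}.)

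For the normality, $\deg M_\Delta(x)=\prod_{i\in\Delta}\deg P_i(x)=\prod_{i\in\Delta}q^{n_i}=q^{\sum_{i\in\Delta}n_i}=q^{\,n-k}$ with $k=n-\sum_{i\in\Delta}\deg p_i(x)$, so the equivalence (I)$\Leftrightarrow$(II) in Lemma \ref{lem-equ} gives that $\alpha$ is a $k$-normal element for $\F_Q/\F_q$. The only genuine content — and the main, though modest, obstacle — is the gcd identity $\gcd(e_i(x),x^n-1)=l_i(x)$, together with the bookkeeping that $\varphi$ converts divisibility and evaluation statements in $\mathcal{F}_q[x]$ faithfully into statements in $\F_q[x]$; once these are in place the theorem follows formally from Lemma \ref{lem-equ} and the characterization of $M_\alpha$ already established in the introduction.
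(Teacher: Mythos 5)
Your proof is correct and follows essentially the same route as the paper: everything reduces to Lemma \ref{lem-cri} via the equivalence $E_i(\alpha)\neq 0 \Leftrightarrow L_i(\alpha)\neq 0$. The only difference is that the paper outsources this equivalence to the proof of [6, Theorem 2], whereas you derive it directly from the observation that $\gcd(e_i(x),x^n-1)=l_i(x)$ (read off from the defining congruences (\ref{eqn-cong}) together with squarefreeness of $x^n-1$), which makes the argument self-contained; the remaining steps (transporting divisibility through $\varphi$, the degree count, and the appeal to Lemma \ref{lem-equ}) match the paper's reasoning.
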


\begin{proof}
Let $m_i(x)=\frac{x^n -1}{p_i(x)}, M_i(x)=\varphi(m_i(x)) (1 \leq i \leq s).$ It has been proved in the proof
of [6, Theorem 2] that for each $\alpha \in \F^{\times}_{Q}$ and $1 \leq i \leq s, M_i(\alpha)\neq 0$
if and only if $E_i(\alpha) \neq 0.$ Then the conclusion can be derived directly from Lemma \ref{lem-cri}.
\end{proof}

The idempotents $e_1(x),e_2(x),\ldots,e_s(x)$ are determined by the congruence equations (\ref{eqn-cong}),
but the following method is easier to calculate $e_i(x) \ (1 \leq i \leq s)$ in certain cases.

\begin{theorem}(\cite{ZF},Theorem 3)\label{thm-ZF} Suppose that $Q=q^n, \gcd(n,q)=1.$ Let
$\mathcal{A}_i (1 \leq i \leq s)$ be the partition of $\mathrm{Z}_n$ into $q$-classes, $e_i(x)$
and $p_i(x) \ (1 \leq i \leq s)$ be the corresponding idempotents of $\F_q[x]/(x^n -1)$ and monic irreducible
factors of $x^n -1$ in $\F_q[x].$ Let $\zeta$ be a primitive $n$-th root of 1 in the algebraic closure of
$\F_q.$ For each $i (1 \leq i \leq s),$  we take $\alpha_i$ to be a zero of $p_i(x)$ which means
$\alpha_i=\zeta^{a_i}$ for some $a_i \in \mathcal{A}_i.$ Let
\begin{equation}
\varepsilon_i(x)=\sum_{a \in \mathcal{A}_i}x^a \quad (1 \leq i \leq s).
\end{equation}
And $\mathbf{M}$ is an $s \times s$ matrix over $\F_{q}$ defined by
$$\mathbf{M}=(\varepsilon_i(\alpha_j))_{1 \leq i,j \leq s}.$$
Then $\det(\mathbf{M})\neq 0$ and

\begin{eqnarray*}
\left(
          \begin{array}{c}
          e_1(x) \\
          \vdots \\
           e_s(x) \\
            \end{array}
            \right)=\mathbf{M^{-1}} \left(
          \begin{array}{c}
          \varepsilon_1(x) \\
          \vdots \\
           \varepsilon_s(x). \\
            \end{array}
            \right).
\end{eqnarray*}
\end{theorem}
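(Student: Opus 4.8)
The plan is to argue entirely inside the semisimple $\F_q$-algebra $A=\F_q[x]/(x^n-1)$, recognizing the change-of-basis matrix between $\{e_i(x)\}$ and $\{\varepsilon_i(x)\}$ as $\mathbf{M}^{-1}$. First I would introduce the Frobenius operator $\sigma\colon A\to A$, $\sigma(f(x))=f(x)^q$. Since all coefficients lie in $\F_q$, one has $f(x)^q=f(x^q)$ in $\F_q[x]$, so on $A$ the map $\sigma$ simply permutes the monomial basis $\{x^a:a\in\Z/n\Z\}$ by $x^a\mapsto x^{aq}$; its orbits on $\{0,1,\dots,n-1\}$ are precisely the $q$-classes $\mathcal{A}_1,\dots,\mathcal{A}_s$. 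Hence an element $\sum_a c_ax^a\in A$ is $\sigma$-fixed if and only if $a\mapsto c_a$ is constant on each $\mathcal{A}_i$, and therefore the fixed subring $A^{\sigma}$ has $\F_q$-basis $\{\varepsilon_1(x),\dots,\varepsilon_s(x)\}$; these $s$ polynomials are $\F_q$-linearly independent because their monomial supports are pairwise disjoint.

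Next I would show that every $e_i(x)$ lies in $A^{\sigma}$. The isomorphism $\pi$ introduced above is a ring homomorphism, hence commutes with raising to the $q$-th power; on $\bigoplus_j \F_{Q_j}$ the $q$-th power acts as the coordinatewise Frobenius, which fixes $0$ and $1$. Since $\pi(e_i(x))=v_i$ has every coordinate in $\{0,1\}$, we get $\pi(\sigma(e_i))=\sigma(\pi(e_i))=\pi(e_i)$, so $\sigma(e_i)=e_i$. By the previous paragraph there are unique scalars $c_{ij}\in\F_q$ with
\[
e_i(x)=\sum_{j=1}^{s}c_{ij}\,\varepsilon_j(x)\qquad(1\le i\le s);
\]
write $C=(c_{ij})_{1\le i,j\le s}$.

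Finally I would evaluate at the chosen zeros $\alpha_j=\zeta^{a_j}$. For $f\in\F_q[x]$, the value $f(\alpha_j)$ is the image of $f$ under the reduction $\F_q[x]\to\F_q[x]/(p_j(x))=\F_{Q_j}$, which descends to $A$ because $p_j(x)\mid x^n-1$; thus the defining congruences (\ref{eqn-cong}) give $e_i(\alpha_j)=\delta_{ij}$, whereas $\varepsilon_k(\alpha_j)$ is by definition the $(k,j)$-entry of $\mathbf{M}$. Substituting the displayed formula for $e_i$ yields $\delta_{ij}=\sum_k c_{ik}\varepsilon_k(\alpha_j)=(C\mathbf{M})_{ij}$, so $C\mathbf{M}=I_s$. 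Consequently $\det\mathbf{M}\ne0$ and $C=\mathbf{M}^{-1}$, which is exactly the asserted identity.

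The monomial-permutation description of $\sigma$ and the disjoint-support independence of the $\varepsilon_i$ are routine; the points that need care are verifying that each $e_i$ is genuinely $\sigma$-invariant and making the evaluation $f\mapsto f(\alpha_j)$ precise as a map out of $A$, so that the equalities $e_i(\alpha_j)=\delta_{ij}$ and $\varepsilon_k(\alpha_j)=\mathbf{M}_{kj}$ are legitimate; once these are in place, $C\mathbf{M}=I_s$ is a one-line computation. It is also worth noting that $\varepsilon_i(\alpha_j)$ is independent of which zero of $p_j$ is chosen, since $\mathcal{A}_i$ is stable under multiplication by $q$.
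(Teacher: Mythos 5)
Your argument is correct and complete. Note that this paper does not reprove the statement at all --- it is quoted from \cite{ZF} (Theorem 3) --- so there is no in-paper proof to compare against; but your route is the natural one and, as far as the citation suggests, the same in spirit: both $\{e_i(x)\}$ and $\{\varepsilon_i(x)\}$ lie in (indeed span) the fixed subalgebra of the $q$-power map $\sigma$ on $A=\F_q[x]/(x^n-1)$, and the transition matrix is pinned down by evaluating at the chosen zeros $\alpha_j$, where $e_i(\alpha_j)=\delta_{ij}$ forces $C\mathbf{M}=I_s$, hence $\det\mathbf{M}\neq 0$ and $C=\mathbf{M}^{-1}$. All the delicate points are handled: $\sigma$ permutes the monomials $x^a\mapsto x^{aq\bmod n}$ bijectively because $\gcd(n,q)=1$, its orbits are exactly the $q$-classes so the $\varepsilon_i$ form a basis of $A^{\sigma}$; $\sigma(e_i)=e_i$ follows from $\pi(e_i)=v_i$ having coordinates $0,1$; and evaluation at $\alpha_j$ descends to $A$ since $p_j(x)\mid x^n-1$. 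Your closing remark that $\varepsilon_i(\alpha_j)$ does not depend on the choice of zero of $p_j$ (because $q\mathcal{A}_i=\mathcal{A}_i$) is also what guarantees the entries of $\mathbf{M}$ lie in $\F_q$, consistent with the statement.
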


By using the idempotents, a criterion on $0$-normal elements has been given in \cite{ZF}:
$\alpha \in \F^{\times}_{Q}$ is a normal element for extension $\F_{Q}/ \F_{q} \ (Q=q^n, \gcd(n,q)=1)$
if and only if $E_i(\alpha) \neq 0 \ (1 \leq i \leq s).$ Now we present such type of criterion on $1$-normal
elements as an application of Theorem \ref{thm-main}.

By Theorem \ref{thm-main}, $\alpha \in \F^{\times}_{Q}$ is a $1$-normal element for extension
$\F_{Q}/ \F_{q} \ (Q=q^n, \gcd(n,q)=1)$
if and only if the minimal $q$-polynomial $M_{\alpha}(x)$ of $\alpha$ is $\varphi(\frac{x^n -1}{p_i(x)})$
where $p_i(x)$ is a factor of $x^n -1$ in $\F_q[x]$ with degree $1$ so that $p_i(x)=x-c$
for some $c \in \F^{\times}_{q},$ which means that $c^{q-1}=1.$ Moreover, $c^n=1$ so that $c^d=1$
where $d=\gcd (q-1, n).$ Let $\gamma$ be a primitive element of $\F_q$
so that $\F^{\times}_{q}= \langle \gamma \rangle.$ Let $n=ed$ and $\beta=\gamma^e.$ Then the zeros
of $x^n -1$ in $\F_q$ are $\beta^{\lambda} \ (0 \leq \lambda \leq d-1)$ and the decomposition of
$x^n -1$ in $\F_q[x]$ is
\begin{equation}\label{eqn-dec}
x^n -1=p_1(x)p_2(x)\cdots p_d(x) p_{d+1}(x) \cdots  p_{s}(x)
\end{equation}
where $p_{\lambda}(x)=x-\beta^{\lambda -1}$ for $1 \leq \lambda \leq d,$ and $\deg p_{\lambda}(x)\geq 2 $
for $\lambda \geq d+1.$

For $ 1 \leq \lambda \leq d,$
\begin{eqnarray*}
l_{\lambda}(x)&=&\frac{x^n -1}{x-\beta^{\lambda -1}}=\sum^{n-1}_{i=0}\beta^{(\lambda-1)(n-1-i)}x^i \\
&=& \sum^{n-1}_{i=0}\beta^{(\lambda-1)(-1-i)}x^i \ (\mbox{since} \ \beta^n=1).
\end{eqnarray*}
And
$$
L_{\lambda}(x)=\varphi(l_{\lambda}(x))=\sum^{n-1}_{i=0}\beta^{(\lambda-1)(-1-i)}x^{q^{i}}.
$$

Therefore
\begin{eqnarray*}
L_{\lambda}(\alpha)&=&\sum^{e-1}_{l=0}\sum^{d-1}_{r=0}\beta^{(\lambda-1)(-1-r)}\alpha^{q^{dl+r}} \quad ( \mbox{let} \  i=dl+r )\\
&=& \sum^{d-1}_{r=0} \beta^{(\lambda-1)(-1-r)}(\tr^{n}_{d}(\alpha))^{q^{r}},
\end{eqnarray*}
where $\mathrm{Tr}^{n}_{d}$ is the trace mapping from $\F_Q =\F_{q^{n}}$ to $\F_{q^{d}}.$ Particularly,
$L_1(\alpha)=\sum^{d-1}\limits_{r=0}(\tr^{n}_{d}(\alpha))^{q^{r}}
=\tr^{d}_{1}(\tr ^{n}_{d}(\alpha))=\tr(\alpha)$
where $\tr=\tr^{n}_{1}$ is the trace mapping from $\F_Q$ to $\F_q.$

From these discussions we get the following result.

\begin{theorem}\label{thm-nor}
Suppose that $Q=q^n, \gcd(n,q)=1.$ Let $\F^{\times}_{q}=\langle \gamma \rangle, d=\gcd(n,q-1),n=ed$
and $\beta=\gamma^e.$ Then $x^n -1$ is decomposed in $\F_q[x]$ as formula (\ref{eqn-dec}). For any $\alpha \in \F^{\times}_{Q},$
the following statements are equivalent to each other.

(I) $\alpha$ is a $1$-normal element for extension $\F_Q / \F_q;$

(II) The minimum $q$-polynomial of $\alpha$ is
$L_{\lambda}(x)=\sum^{d-1}\limits_{r=0}\beta^{(\lambda-1)(-1-r)}(\mathrm{Tr}^{n}_{d}(\alpha))^{q^{r}}$
for some $\lambda, 1 \leq \lambda \leq d,$ where $\mathrm{Tr}^{n}_{d}(x)=\sum^{e-1}\limits_{l=0}x^{q^{dl}}.$

(III) There exists just one $\lambda$ for $ 1 \leq \lambda \leq d$ such that
 $\sum^{d-1}\limits_{r=0} \beta^{(\lambda-1)(-1-r)}(\mathrm{Tr}^{n}_{d}(\alpha))^{q^{r}}=0$
and $E_{\lambda}(\alpha) \neq 0$ for all $ d+1 \leq \lambda \leq s,$ where $E_i(x)$ is the $q$-polynomial
corresponding to the idempotent $e_i(x).$

(IV) There exists just one $\lambda$ for $1 \leq \lambda \leq d $
such that $\sum^{d-1}\limits_{r=0} \beta^{(\lambda-1)(-1-r)}(\mathrm{Tr}^{n}_{d}(\alpha))^{q^{r}}=0$
and $\{ \alpha, \alpha^q, \ldots \alpha^{q^{n-2}}\}$ is $\F_q$-linear independent.
\end{theorem}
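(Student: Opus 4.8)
The plan is to derive all four equivalences from the index-set description furnished by Lemma~\ref{lem-cri} and Theorem~\ref{thm-main}. Put $\Delta=\Delta(\alpha)$ and recall that $\Delta=\{i:M_i(\alpha)\neq0\}=\{i:E_i(\alpha)\neq0\}$, where $M_i=\varphi\bigl((x^n-1)/p_i(x)\bigr)$, that the minimal $q$-polynomial is $M_\alpha(x)=\bigotimes_{i\in\Delta}P_i(x)$, and that the normality is $k=\sum_{i\notin\Delta}\deg p_i(x)$. Since in~(\ref{eqn-dec}) the factors of degree $1$ are precisely $p_1(x),\dots,p_d(x)$ while $\deg p_i(x)\geq2$ for $i\geq d+1$, the identity $k=\sum_{i\notin\Delta}\deg p_i(x)$ gives at once: $k=1$ if and only if exactly one index lies outside $\Delta$ and that index belongs to $\{1,\dots,d\}$; equivalently, $M_{\lambda_0}(\alpha)=0$ for exactly one $\lambda_0\in\{1,\dots,d\}$ while $M_i(\alpha)\neq0$ for every other $i$ (in particular for all $i\geq d+1$). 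This single observation is the engine behind everything else.

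Next I would record the dictionary $M_\lambda=L_\lambda$ for $1\leq\lambda\leq d$: here $(x^n-1)/p_\lambda(x)=(x^n-1)/(x-\beta^{\lambda-1})=l_\lambda(x)$, hence $M_\lambda=\varphi(l_\lambda)=L_\lambda$, and the value $L_\lambda(\alpha)=\sum_{r=0}^{d-1}\beta^{(\lambda-1)(-1-r)}(\tr^n_d(\alpha))^{q^r}$ is exactly the expression computed just before the statement. With this, the observation above becomes: $\alpha$ is $1$-normal iff $M_\alpha(x)=\bigotimes_{i\neq\lambda_0}P_i(x)=\varphi\bigl((x^n-1)/p_{\lambda_0}(x)\bigr)=L_{\lambda_0}(x)$ for some (necessarily unique) $\lambda_0\leq d$, which is (II); conversely if $M_\alpha=L_{\lambda_0}$ then $\deg M_\alpha=q^{n-1}$, so Lemma~\ref{lem-equ} forces $k=1$. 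Feeding in the equivalence $M_i(\alpha)\neq0\iff E_i(\alpha)\neq0$ from Theorem~\ref{thm-main}, the same observation reads: exactly one $\lambda\leq d$ with $L_\lambda(\alpha)=0$ and $E_\lambda(\alpha)\neq0$ for all $d+1\leq\lambda\leq s$, which is (III).

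For the equivalence with (IV) I would use part (III) of Lemma~\ref{lem-equ}. If $\alpha$ is $1$-normal, that part says $\{\alpha,\alpha^q,\dots,\alpha^{q^{n-2}}\}$ is an $\F_q$-basis of $V_\alpha$, hence $\F_q$-linearly independent, and the first paragraph shows that exactly one $\lambda\leq d$ satisfies $L_\lambda(\alpha)=0$; so (IV) holds. Conversely, $\F_q$-linear independence of the $n-1$ powers $\alpha,\dots,\alpha^{q^{n-2}}$ gives $\dim_{\F_q}V_\alpha\geq n-1$, i.e.\ $k\leq1$, while the existence of some $\lambda_0\leq d$ with $L_{\lambda_0}(\alpha)=M_{\lambda_0}(\alpha)=0$ puts $\lambda_0\notin\Delta$, so $\alpha$ is not normal and $k\geq1$; combining, $k=1$, which is (I).

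I expect no serious obstacle, since the whole thing is bookkeeping layered on Lemma~\ref{lem-cri}, Theorem~\ref{thm-main} and Lemma~\ref{lem-equ}. The one step needing a little care is the converse half of (IV)$\Rightarrow$(I): rather than trying to verify the basis clause of Lemma~\ref{lem-equ}(III) directly, one should pin down $k=1$ by squeezing between $k\leq1$ (from linear independence) and $k\geq1$ (from the vanishing condition); once $\dim_{\F_q}V_\alpha=n-1$ is known, the $n-1$ independent powers automatically form the basis, so the uniqueness clause in (IV) is redundant but harmless to keep.
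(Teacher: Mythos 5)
Your proof is correct and follows essentially the same route as the paper: identify $1$-normality with the minimal $q$-polynomial being $\varphi\bigl((x^n-1)/p_\lambda(x)\bigr)=L_\lambda(x)$ for a degree-one factor $p_\lambda$ ($\lambda\leq d$), then translate via Lemma~\ref{lem-cri}, Theorem~\ref{thm-main} and Lemma~\ref{lem-equ}. The paper's own proof is only a two-line sketch, so your write-up merely supplies the bookkeeping it omits, including the careful squeeze $k\leq1$ and $k\geq1$ for (IV)$\Rightarrow$(I).
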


\begin{proof}
$\alpha$ is a $1$-normal element of $\F_Q / \F_q$ if and only if the degree of the minimum
$q$-polynomial $M_{\alpha}(x)$ of $\alpha$ is $q^{n-1}.$ Namely,
$M_{\alpha}(x)=L_{\lambda}(x)=\varphi(l_{\lambda}(x))$ where $l_{\lambda}(x)=\frac{x^n -1}{p_{\lambda}(x)}$
 for some $\lambda, 1 \leq \lambda \leq d.$ Therefore (I) and (II) are equivalent. The other
 equivalent relations can be derived from Theorem \ref{thm-main}.
\end{proof}

\begin{corollary}
Suppose that $Q=q^n$ and $\gcd(n,q(q-1))=1.$ Then $x^n -1=p_1(x)p_2(x) \cdots p_s(x)$
where $p_1(x)=x-1$ and $\deg p_{\lambda} \geq 2$ for $2 \leq \lambda \leq s$. The following statements
are equivalent to each other for $\alpha \in \F^{\times}_{Q}.$

(I) $\alpha$ is a $1$-normal element for extension $\F_Q / \F_q;$

(II) The minimum $q$-polynomial of $\alpha$ is $\tr(x)=\sum^{n-1}\limits_{i=0}x^{q^i};$

(III) $\tr(\alpha)=0$ and $E_{\lambda}(\alpha) \neq 0$ for all $2 \leq \lambda \leq s;$

(IV) $\tr (\alpha)=0$ and $\{ \alpha, \alpha^q, \ldots, \alpha^{q^{n-2}}\}$ is
$\F_q$-linear independent.
\end{corollary}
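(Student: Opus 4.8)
The plan is to derive the Corollary as the special case $d=\gcd(n,q-1)=1$ of Theorem \ref{thm-nor}. First I would note that the hypothesis $\gcd(n,q(q-1))=1$ splits into $\gcd(n,q)=1$ and $\gcd(n,q-1)=1$: the former guarantees that Theorem \ref{thm-nor} and the whole idempotent machinery of Section \ref{sec-two} apply to $\F_Q/\F_q$, while the latter says precisely that $d=1$ in the notation of that theorem, hence $e=n$ and $\beta=\gamma^e=\gamma^n$.

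Next I would unwind what $d=1$ does to the data of Theorem \ref{thm-nor}. Since the roots of $x^n-1$ lying in $\F_q$ are exactly the elements whose order divides $\gcd(n,q-1)=1$, the only such root is $1$; thus in the decomposition (\ref{eqn-dec}) there is a single linear factor $p_1(x)=x-\beta^0=x-1$, and $\deg p_\lambda\ge 2$ for all $\lambda\ge 2$, exactly as stated in the Corollary. Moreover $\mathrm{Tr}^n_d=\mathrm{Tr}^n_1=\tr$ and $\mathrm{Tr}^n_d(x)=\sum_{l=0}^{e-1}x^{q^{dl}}=\sum_{i=0}^{n-1}x^{q^i}$. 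Finally, for the unique admissible value $\lambda=1$, the $q$-polynomial of Theorem \ref{thm-nor}(II) collapses to $L_1(x)=\beta^{0}\cdot(\mathrm{Tr}^n_d(x))^{q^0}=\tr(x)$, and the single sum appearing in parts (III) and (IV) collapses to $\beta^{0}\cdot(\mathrm{Tr}^n_d(\alpha))^{q^0}=\tr(\alpha)$.

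With these identifications in hand, statement (I) of the Corollary is verbatim Theorem \ref{thm-nor}(I); statement (II) is Theorem \ref{thm-nor}(II) for $\lambda=1$; in part (III) the clause ``there exists just one $\lambda$, $1\le\lambda\le d$, with the sum equal to $0$'' reduces to ``$\tr(\alpha)=0$'' and the clause ``$E_\lambda(\alpha)\ne 0$ for all $d+1\le\lambda\le s$'' reduces to ``$E_\lambda(\alpha)\ne 0$ for all $2\le\lambda\le s$''; and part (IV) is the analogous specialization of Theorem \ref{thm-nor}(IV). Thus the four statements are pairwise equivalent by Theorem \ref{thm-nor}, and the Corollary follows. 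There is no genuine obstacle here beyond the bookkeeping of the substitution $d=1$; the one point worth an explicit sentence is the determination of the linear factors of $x^n-1$ over $\F_q$ under $\gcd(n,q-1)=1$, which fixes $p_1(x)=x-1$ and forces $\deg p_\lambda\ge 2$ for $\lambda\ge 2$.
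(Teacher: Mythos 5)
Your proposal is correct and follows essentially the same route as the paper: the paper's proof likewise observes that $\gcd(n,q(q-1))=1$ forces $1$ to be the only $n$-th root of unity in $\F^{\times}_q$ (i.e.\ $d=1$ in the notation of Theorem \ref{thm-nor}) and then reads the corollary off as the $d=1$ specialization of that theorem. Your extra bookkeeping (identifying $\mathrm{Tr}^n_d=\tr$, $L_1(x)=\tr(x)$, and the collapse of the ``exactly one $\lambda$'' clause) is exactly the unwinding the paper leaves implicit.
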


\begin{proof}
By assumption $\gcd(n,q(q-1))=1,$ we know that $1$ is the only element $c$ in $\F^{\times}_{q}$
such that $c^n=1.$ Then the conclusion is derived from Theorem \ref{thm-nor} directly.
\end{proof}

\section{Examples}\label{sec-three}
In this section we present examples to determine the normality and the minimum
$q$-polynomial of an element  $\alpha \in \F^{\times}_{Q}$ by using Lemma \ref{lem-cri}
and Theorem \ref{thm-main}.

\begin{example}
Let $p$ and $n$ be prime numbers, $ n \neq p,$ and $q=p^m.$ Suppose that the order of
$q$ in $\gZ^{\times}_{n}$ is $\varphi(n)=n-1.$ Namely, $\gZ^{\times}_{n}=\langle q \rangle.$
Then $x^n-1=p_1(x)p_2(x)$ where
$$ p_1(x)=x-1, p_2(x)=x^{n-1}+x^{n-2}+\cdots+x+1 $$
are irreducible polynomials in $\F_q[x].$ We get
\begin{eqnarray*}
l_1(x)&=& \frac{x^n -1}{p_1(x)}, \  l_2(x)=\frac{x^n -1}{p_2(x)}=p_1(x) \\
L_1(x)&=& \varphi(l_1(x))=\sum^{n-1}_{i=0}x^{q^i}=\tr(x),\  L_2(x)=\varphi(l_2(x))=x^q -x.
\end{eqnarray*}
\end{example}

From Lemma \ref{lem-cri} we get the following result.

\begin{theorem}\label{thm-exam1}
Let $p$ and $n$ be prime numbers, $ n \neq p,$ and $q=p^m, Q=q^n.$ Suppose that
$\gZ^{\times}_{n}=\langle q \rangle.$ For each $\alpha \in \F^{\times}_{Q},$
let $M_{\alpha}(x)$ be the minimal $q$-polynomial of $\alpha.$

(I) If $\alpha \notin \F_q$ and $\tr(\alpha) \neq 0,$ then $M_{\alpha}(x)=x^{q^n}-x$
and $\alpha$ is a ($0$-th) normal element for $\F_Q / \F_q.$

(II) If $\alpha \notin \F_q$ and $\tr(\alpha) =0,$ then $M_{\alpha}(x)=\tr(x)$
and $\alpha$ is a $1$-normal element for $\F_Q / \F_q.$

(III) If $\alpha \in \F^{\times}_{q},$ then $\tr(\alpha)=n\alpha \neq 0$ so that
$M_{\alpha}(x)=x^{q}-x$ and $\alpha$ is an $(n-1)$-normal element for $\F_Q / \F_q.$
\end{theorem}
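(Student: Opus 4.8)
The plan is to apply Lemma~\ref{lem-cri} directly, using the fact that under the hypothesis $\gZ^{\times}_n = \langle q\rangle$ the cyclotomic polynomial $x^n-1$ has exactly two irreducible factors over $\F_q$, namely $p_1(x)=x-1$ and $p_2(x)=x^{n-1}+\cdots+x+1$, with $l_1(x)=p_2(x)$, $l_2(x)=p_1(x)$, $L_1(x)=\varphi(l_1(x))=\tr(x)$ and $L_2(x)=\varphi(l_2(x))=x^q-x$, as computed in the preceding Example. According to Lemma~\ref{lem-cri}, the set $\Delta=\Delta(\alpha)\subseteq\{1,2\}$ is $\{i : L_i(\alpha)\neq 0\}$, and then $M_\alpha(x)=\bigotimes_{i\in\Delta}P_i(x)$ with normality $k=n-\sum_{i\in\Delta}\deg p_i(x)=n-\sum_{i\in\Delta}n_i$, where $n_1=1$, $n_2=n-1$. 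So the entire proof reduces to determining, in each of the three cases, which of the two quantities $L_1(\alpha)=\tr(\alpha)$ and $L_2(\alpha)=\alpha^q-\alpha$ is nonzero.

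First I would record the elementary translations: $L_2(\alpha)=\alpha^q-\alpha=0$ if and only if $\alpha\in\F_q$; and $L_1(\alpha)=\tr(\alpha)$. Then I would go through the three cases. In case (I), $\alpha\notin\F_q$ gives $L_2(\alpha)\neq 0$, so $2\in\Delta$; and $\tr(\alpha)\neq 0$ gives $1\in\Delta$; hence $\Delta=\{1,2\}$, $M_\alpha(x)=P_1(x)\otimes P_2(x)=x^{q^n}-x$, and $k=n-(1+(n-1))=0$. In case (II), again $\alpha\notin\F_q$ so $2\in\Delta$, but $\tr(\alpha)=0$ so $1\notin\Delta$; hence $\Delta=\{2\}$, $M_\alpha(x)=P_2(x)=\varphi(l_1(x))$... more precisely $M_\alpha(x)=P_2(x)$, but observe that $\varphi(p_2(x))=\varphi(x^{n-1}+\cdots+1)=\tr(x)$ since $p_2(x)=l_1(x)$, giving $M_\alpha(x)=\tr(x)$ and $k=n-(n-1)=1$. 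In case (III), $\alpha\in\F^{\times}_q$ forces $L_2(\alpha)=\alpha^q-\alpha=0$, so $2\notin\Delta$; meanwhile $\tr(\alpha)=\sum_{i=0}^{n-1}\alpha^{q^i}=\sum_{i=0}^{n-1}\alpha=n\alpha$, which is nonzero because $n\neq p$ (so $n\not\equiv 0\pmod p$) and $\alpha\neq 0$; hence $1\in\Delta$, $\Delta=\{1\}$, $M_\alpha(x)=P_1(x)=\varphi(x-1)=x^q-x$, and $k=n-1$.

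The one point requiring a little care — the ``main obstacle,'' such as it is — is the identification $M_\alpha(x)=\tr(x)$ in case (II): one must notice that $P_2(x)=\varphi(p_2(x))$ and that $p_2(x)=l_1(x)=\tfrac{x^n-1}{x-1}$, so $P_2(x)=\varphi(l_1(x))=L_1(x)=\tr(x)$; there is a mild notational collision between ``$P_2$ as an irreducible $q$-polynomial factor'' and ``$L_1$ as the complementary cofactor,'' but here they coincide because there are only two factors. Everything else is bookkeeping: the hypothesis $n\neq p$ is used only once, in case (III), to guarantee $n\alpha\neq 0$, and the hypothesis $\gZ^{\times}_n=\langle q\rangle$ is used only to guarantee that $p_2(x)$ is irreducible (so that the decomposition has exactly the stated shape and Lemma~\ref{lem-cri} applies with $s=2$).
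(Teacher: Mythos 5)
Your proposal is correct and follows exactly the route the paper intends: the paper's Example computes $L_1(x)=\varphi(l_1(x))=\tr(x)$ and $L_2(x)=\varphi(l_2(x))=x^q-x$ and then simply invokes Lemma~\ref{lem-cri}, which is precisely your case analysis on whether $\tr(\alpha)$ and $\alpha^q-\alpha$ vanish. Your observation that $P_2(x)=\varphi(p_2(x))=\varphi(l_1(x))=\tr(x)$ because $p_2=l_1$ when $s=2$ is the right way to justify the identification $M_\alpha(x)=\tr(x)$ in case (II), and your accounting of where the hypotheses $n\neq p$ and $\gZ^{\times}_n=\langle q\rangle$ are used is accurate.
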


\begin{example}
Let $p$ be a prime number, $q=p^m,n$ be an odd prime, $n \neq p.$
Suppose that the order of $q$ in $\gZ^{\times}_{n}$ is $l=\frac{\varphi(n)}{2}=\frac{n-1}{2}.$
Then there exists an integer $g$ such that $\gZ^{\times}_{n}=\langle g \rangle$ and
$q=g^2 \in \gZ^{\times}_{n}.$ Then
$$D=\langle q \rangle =\{ q^{\lambda}: 0 \leq \lambda \leq l-1\}=\{g^{2\lambda}:0 \leq \lambda \leq l-1 \}$$
is the subgroup of multiplicative group $\gZ^{\times}_{n}$ and the other coset is
$D^{\prime}=gD=\{g^{2\lambda+1} :0 \leq \lambda \leq l-1\}.$ We have the decomposition
$$x^n -1=p_1(x)p_2(x)p_3(x)$$
in $\F_q[x]$ where
\begin{equation}\label{eqn-sep}
p_1(x)=x-1,\  p_2(x)=\prod_{a\in D}(x-\zeta^a),\ p_3(x)=\prod_{a\in D^{\prime}}(x-\zeta^a)
\end{equation}
where $\zeta$ is an $n$-th primitive root of $1$ in the algebraic closure of $\F_q.$
It is not easy to get the polynomials
$p_i(x) \in \F_q[x], l_i(x)=\frac{x^n -1}{p_i(x)} \in \F_q[x]$
and $L_i(x)=\varphi(l_i(x)) \in \mathcal{F}_q[x]$ explicitly for $i=2$ and $3.$
Now we use the idempotents.  With the notations given in Section \ref{sec-two}, we have
$$\varepsilon_1(x)=1,\ \varepsilon_2(x)=\sum_{r \in D}x^r,\  \varepsilon_3(x)=\sum_{r \in D^{\prime}}x^r$$
$$
\mM=\left(
   \begin{array}{ccc}
     \varepsilon_1(1) & \varepsilon_1(\zeta) &  \varepsilon_1(\zeta^g) \\
     \varepsilon_2(1) &  \varepsilon_2(\zeta) & \varepsilon_2(\zeta^g) \\
     \varepsilon_3(1) &  \varepsilon_3(\zeta) & \varepsilon_3(\zeta^g) \\
   \end{array}
 \right)
=\left(
   \begin{array}{ccc}
     1 & 1 & 1 \\
     l & C & B \\
     l & B & C \\
   \end{array}
 \right)
$$
where $B=\sum\limits_{r \in D^{\prime}}\zeta^r \in \F_q, C=\sum\limits_{r \in D}\zeta^r \in \F_q.$
By Theorem \ref{thm-ZF}, $\det(M)=n(B-C) \neq 0.$ Then we get
$$
\mM^{-1}=\frac{1}{n(B-C)}\left(
   \begin{array}{ccc}
    B-C & B-C & B-C \\
    l(B-C) & C-l & l-B \\
     l(B-C) & l-B & C-l \\
   \end{array}
 \right),
$$
and
$$\left(
          \begin{array}{c}
          e_1(x) \\
         e_2(x) \\
         e_3(x)\\
            \end{array}
            \right)= M^{-1} \left(
          \begin{array}{c}
          \varepsilon_1(x) \\
          \varepsilon_2(x) \\
           \varepsilon_3(x) \\
            \end{array}
            \right). $$
Namely, we get
\begin{eqnarray}\label{eqn-exa}
e_1(x)&=&\frac{1}{n}(\varepsilon_1(x)+\varepsilon_2(x)+\varepsilon_3(x))=\frac{1}{n}\sum^{n-1}_{i=0}x^i,  \nonumber \\
e_2(x)&=&\frac{1}{n(B-C)}[l(B-C)+(C-l)\varepsilon_2(x)+(l-B)\varepsilon_3(x)],  \\
e_3(x)&=&\frac{1}{n(B-C)}[l(B-C)+(l-B)\varepsilon_2(x)+(C-l)\varepsilon_3(x)].\nonumber
\end{eqnarray}

Now we compute $B$ and $C$ by using the Legendre symbol
$$
(\frac{r}{n}) =  \left \{
\begin{array}{ll}
1, & \mbox{if} \ r \in D, \\
-1 & \mbox{if} \ r \in D^{\prime}.
\end{array}
\right.
$$
We have $B+C=\sum^{n-1}\limits_{r=1} \zeta^r =-1, B-C=\sum^{n-1}\limits_{r=1} (\frac{r}{n}) \zeta^r \in \F_q,$
where $B-C$ is the quadratic Gauss sum over $\F_n,$ but valued in $\F_q,$ instead of the complex number field
$\C.$ We have calculated $B$ and $C$ in \cite{ZF} as following result.
\end{example}

Case (I): $2 \nmid q.$ Let $n^{\ast}=(\frac{-1}{n})n,$ then
$$B=\frac{1}{2}(-1+\mu\sqrt{n^{\ast}}), C=\frac{1}{2}(-1-\mu\sqrt{n^{\ast}}) \quad (\mu=1 \ \mbox{or} -1).$$
Then by (\ref{eqn-exa}) we get

\begin{eqnarray*}
n \mu \sqrt{n^{\ast}}e_2(x)&=&l\mu \sqrt{n^{\ast}}+
\frac{n}{2}(\varepsilon_3(x)-\varepsilon_2(x))
-\frac{\mu \sqrt{n^{\ast}}}{2}(\varepsilon_3(x)+\varepsilon_2(x)) \\
n \mu \sqrt{n^{\ast}}e_3(x)&=&l\mu \sqrt{n^{\ast}}-
\frac{n}{2}(\varepsilon_3(x)-\varepsilon_2(x))
-\frac{\mu \sqrt{n^{\ast}}}{2}(\varepsilon_3(x)+\varepsilon_2(x))
\end{eqnarray*}
and
\begin{eqnarray}\label{eqn-lar}
n E_1(x)&=& \tr(x) \nonumber \\
2n\sqrt{n^{\ast}} E_2(x)&=& n\sqrt{n^{\ast}} x -\mu n \sum^{n-1}_{r=1}(\frac{r}{n})x^{q^r}-\sqrt{n^{\ast}} \tr(x) \\
2n\sqrt{n^{\ast}} E_3(x)&=& n\sqrt{n^{\ast}} x +\mu n \sum^{n-1}_{r=1}(\frac{r}{n})x^{q^r}-\sqrt{n^{\ast}} \tr(x).\nonumber
\end{eqnarray}

Case (II): $2 \mid q.$ Then $B+C=B-C=1$ and
$$
\{ B,C \} =  \left \{
\begin{array}{ll}
\{ 0,1\}, & \mbox{if} \ n \equiv \pm1 (\bmod 8), \\
\{\omega,\omega +1\} & \mbox{if} \ n \equiv \pm 3(\bmod 8),
\end{array}
\right.
$$
where $\omega \in \F_4 \backslash \{0,1\}.$ Then by (\ref{eqn-exa}) we get
\begin{eqnarray*}
ne_2(x)&=& l+(l+B)(\varepsilon_2(x)+\varepsilon_3(x))+\varepsilon_2(x)
=l\sum^{n-1}_{r=0}x^r +B\sum^{n-1}_{r=1}x^r+\sum_{r \in D}x^r \\
ne_3(x)&=& l\sum^{n-1}_{r=0}x^r +C\sum^{n-1}_{r=1}x^r+\sum_{r \in D}x^r,
\end{eqnarray*}
and
\begin{eqnarray}\label{eqn-res}
n E_1(x)&=& \tr(x) \nonumber \\
n E_2(x)&=& l \tr(x)+B(\tr(x)+x)+\sum_{r \in D}x^{q^r}\\
n E_3(x)&=& l \tr(x)+C(\tr(x)+x)+\sum_{r \in D}x^{q^r} \quad (C=B+1).\nonumber
\end{eqnarray}

Now we determine the normality of any element $\alpha \in \F^{\times}_Q.$

\begin{theorem}\label{thm-exam2}
Let $p$ and $n$ be distinct prime numbers, $ n \geq 3, q=p^m, Q=q^n.$
Suppose that $\gZ^{\times}_{n}=\langle g \rangle$ and $q=g^2 \in \gZ^{\times}_{n}$
so that the order of $q$ in the multiplicative group $\gZ^{\times}_{n}$ is $l=\frac{n-1}{2}.$
Let $$D=\langle q \rangle =\{ g^{2 \lambda}: 0 \leq \lambda \leq l-1\}.$$
Then $x^n -1=p_1(x)p_2(x)p_3(x)$ where $p_i(x) (1 \leq i \leq 3)$ are the monic
irreducible factors of $x^n -1$ in $\F_q[x]$ defined by (\ref{eqn-sep}).
 Let $P_i(x)=\varphi(p_i(x)) \ (1 \leq i \leq 3).$ For $\alpha \in \F^{\times}_Q,$
let $M_{\alpha}(x)$ be the minimum $q$-polynomial of $\alpha$,
and $\alpha$ is a $k$-normal element for $\F_Q / \F_q.$

Case(I): $2 \nmid q.$ Let $\delta =\sum^{n-1}\limits_{r=0}(\frac{r}{n})\alpha^{q^r}$

$(a)$ If $\alpha \in \F^{\times}_q,$ then $M_{\alpha}(x)=x^q -x$ and $k=n-1.$

$(b)$ Suppose that $\alpha \in \F_Q \backslash \F_q$ and $\tr(\alpha)=0.$

If $\sqrt{n^{\ast}}\delta  \notin \{ \pm n \alpha\},$ then $M_{\alpha}(x)=\tr(x)=\sum^{n-1}\limits_{i=0}x^{q^i}$
and $k=1.$ Otherwise $M_{\alpha}(x)=P_2(x) $ or $P_3(x) $ and $k=l+1=\frac{n+1}{2}.$

$(c)$ Suppose that $\alpha \in \F_Q \backslash \F_q$ and $\tr(\alpha) \neq 0.$

If $\sqrt{n^{\ast}}\delta  \notin \{ \pm (n \alpha-\tr(\alpha)  \},$ then $M_{\alpha}(x)=x^{q^n}-x$
and $k=0$ ($\alpha$ is a normal element for $\F_Q / \F_q.$) Otherwise $M_{\alpha}(x)=P_i(x^q-x)=P_i(x)^q -P_i(x) $
for $i=2$ or $ 3$ and $k=l=\frac{n-1}{2}.$

Case (II): $2 \mid q.$ Let $\varepsilon=\sum\limits_{r \in D}\alpha^{q^r}, \omega \in \F_4 \backslash \{0,1\}$ and
$$
 B =  \left \{
\begin{array}{ll}
0, & \mbox{if} \ n \equiv \pm1 (\bmod 8), \\
\omega, & \mbox{if} \ n \equiv \pm 3(\bmod 8).
\end{array}
\right.
$$

$(a)$ If $\alpha \in \F^{\times}_q,$ then $M_{\alpha}(x)=x^q -x$ and $k=n-1.$

$(b)$ Suppose that $\alpha \in \F_Q \backslash \F_q$ and $\tr(\alpha)=0.$
If $\varepsilon \notin \{ B\alpha, (B+1)\alpha\},$
then $M_{\alpha}(x)=\tr(x)=\sum^{n-1}\limits_{i=0}x^{q^i}$ and $k=1.$
Otherwise $M_{\alpha}(x)=P_2(x) $ or $P_3(x) $ and $k=l+1=\frac{n+1}{2}.$

$(c)$ Suppose that $\alpha \in \F_Q \backslash \F_q$ and $\tr(\alpha) \neq 0.$
If $\varepsilon \notin \{ l \tr(\alpha)+B(\tr(\alpha)+\alpha), l \tr(\alpha)+(B+1)(\tr(\alpha)+\alpha)\},$
then $M_{\alpha}(x)=x^{q^n}-x$ and $k=0$ ($\alpha$ is a normal element for $\F_Q / \F_q.$)
Otherwise $M_{\alpha}(x)=P_i(x)^q +P_i(x) $
for $i=2$ or $ 3$ and $k=l=\frac{n-1}{2}.$
\end{theorem}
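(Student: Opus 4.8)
The theorem is a direct unpacking of Theorem~\ref{thm-main} in the concrete situation where $x^n-1$ has exactly three irreducible factors $p_1,p_2,p_3$ over $\F_q$, whose associated idempotents $E_1,E_2,E_3$ have been computed explicitly in displays~\eqref{eqn-lar} (case $2\nmid q$) and~\eqref{eqn-res} (case $2\mid q$). So the entire proof is: evaluate $E_1(\alpha), E_2(\alpha), E_3(\alpha)$ for a given $\alpha\in\F_Q^\times$, determine the set $\Delta=\{i: E_i(\alpha)\neq 0\}$, and then read off $M_\alpha(x)=\bigotimes_{i\in\Delta}P_i(x)$ and $k=n-\sum_{i\in\Delta}\deg p_i(x)$, recalling $\deg p_1=1$, $\deg p_2=\deg p_3=l=\tfrac{n-1}{2}$. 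The first observation is that $E_1(\alpha)=\tfrac1n\tr(\alpha)$ (up to the nonzero scalar $n$, which is invertible since $p\nmid n$), so $1\in\Delta \iff \tr(\alpha)\neq 0$; this is what splits the statement into the $\tr(\alpha)=0$ and $\tr(\alpha)\neq 0$ branches.

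\emph{Case $2\nmid q$.} From~\eqref{eqn-lar}, $2n\sqrt{n^\ast}\,E_2(\alpha)=n\sqrt{n^\ast}\,\alpha-\mu n\,\delta-\sqrt{n^\ast}\,\tr(\alpha)$ and similarly for $E_3$ with $+\mu n\,\delta$, where $\delta=\sum_{r=1}^{n-1}(\tfrac rn)\alpha^{q^r}$; note $\delta$ as defined in the theorem runs from $r=0$, but the $r=0$ term contributes $(\tfrac0n)\alpha=0$, so the two agree. Hence $E_2(\alpha)=0 \iff \sqrt{n^\ast}\,(n\alpha-\tr(\alpha))=\mu n\,\delta \cdot\sqrt{n^\ast}/\sqrt{n^\ast}$—more precisely $\mu n\delta = n\alpha-\tr(\alpha)$ after dividing by $\sqrt{n^\ast}\neq 0$—and likewise $E_3(\alpha)=0\iff -\mu n\delta = n\alpha-\tr(\alpha)$, i.e.\ $\mu n\delta = -(n\alpha-\tr\alpha)$. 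Since $\mu=\pm1$, the condition "$E_2(\alpha)=0$ or $E_3(\alpha)=0$'' is precisely "$\sqrt{n^\ast}\,\delta\in\{\pm(n\alpha-\tr(\alpha))\}$'' after multiplying through; one should also observe that $E_2(\alpha)$ and $E_3(\alpha)$ cannot both vanish (else $n\alpha-\tr(\alpha)=0$ and $\delta=0$, forcing $\alpha\in\F_q$ by examining the $q$-conjugates, which is the subcase handled separately). Now do the bookkeeping: (a) if $\alpha\in\F_q^\times$ then $\alpha^{q^r}=\alpha$ for all $r$, so $\tr(\alpha)=n\alpha\neq 0$, $\delta=0$, hence $2\in\Delta,3\in\Delta$ fail while — wait, recompute: with $\delta=0$ we get $E_2(\alpha)=E_3(\alpha)$ proportional to $n\sqrt{n^\ast}\alpha-\sqrt{n^\ast}\cdot n\alpha=0$, so $\Delta=\{1\}$, $M_\alpha=P_1=x^q-x$, $k=n-1$; (b) if $\tr(\alpha)=0$, $\alpha\notin\F_q$, then $1\notin\Delta$, and $\Delta$ is $\{2,3\}$ unless one of them drops, i.e.\ unless $\sqrt{n^\ast}\delta\in\{\pm n\alpha\}$, giving either $\Delta=\{2,3\}$, $M_\alpha=P_2\otimes P_3=\varphi\bigl(\tfrac{x^n-1}{x-1}\bigr)=\tr(x)$, $k=1$, or $\Delta=\{2\}$ or $\{3\}$, $k=n-l=\tfrac{n+1}{2}$; (c) if $\tr(\alpha)\neq 0$, then $1\in\Delta$ always, so $\Delta=\{1,2,3\}$ (giving $M_\alpha=x^{q^n}-x$, $k=0$) unless $E_2$ or $E_3$ vanishes, i.e.\ unless $\sqrt{n^\ast}\delta\in\{\pm(n\alpha-\tr(\alpha))\}$, in which case $\Delta=\{1,2\}$ or $\{1,3\}$, $M_\alpha=P_1\otimes P_i=\varphi(p_1\ell_i)$ where $p_1\ell_i=(x-1)\cdot\tfrac{x^n-1}{p_i}$... one checks $P_1(x)=x^q-x$ and the composition $P_i(x^q-x)=P_i(x)^q-P_i(x)$ by $\F_q$-linearity, and $k=n-1-l=\tfrac{n-1}{2}$.

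\emph{Case $2\mid q$.} Identical in structure, using~\eqref{eqn-res}: $nE_2(\alpha)=l\tr(\alpha)+B(\tr(\alpha)+\alpha)+\varepsilon$ and $nE_3(\alpha)=l\tr(\alpha)+(B+1)(\tr(\alpha)+\alpha)+\varepsilon$ with $\varepsilon=\sum_{r\in D}\alpha^{q^r}$. Here $E_2(\alpha)=0\iff \varepsilon = l\tr(\alpha)+B(\tr(\alpha)+\alpha)$ and $E_3(\alpha)=0\iff\varepsilon=l\tr(\alpha)+(B+1)(\tr(\alpha)+\alpha)$; the two right-hand sides differ by $\tr(\alpha)+\alpha$, which is nonzero when $\alpha\notin\F_q$ (if $\tr\alpha=0$ it equals $\alpha\neq0$; if $\tr\alpha\neq0$... one needs $\alpha\neq\tr(\alpha)$, true for $\alpha\notin\F_q$ since $\tr(\alpha)\in\F_q$), so again at most one of $E_2,E_3$ vanishes. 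Specializing to $\tr(\alpha)=0$ gives the conditions $\varepsilon\in\{B\alpha,(B+1)\alpha\}$ as in part (b), and to $\tr(\alpha)\neq 0$ gives the conditions in part (c); in characteristic $2$ the composition identity becomes $P_i(x)^q+P_i(x)$ since $-1=1$. The case $\alpha\in\F_q^\times$ is handled as before: then $\varepsilon=l\alpha$, $\tr(\alpha)=n\alpha$, and one checks $nE_2(\alpha)=l n\alpha + B(n\alpha+\alpha)+l\alpha$... using $n$ odd so $n\equiv 1$, and $B\in\F_4$ with $2B=0$ — a short computation confirms $E_2(\alpha)=E_3(\alpha)=0$, so $\Delta=\{1\}$, $k=n-1$.

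\emph{Main obstacle.} None of this is deep; the only genuinely careful points are (i) verifying that $E_2(\alpha)$ and $E_3(\alpha)$ never vanish simultaneously for $\alpha\notin\F_q$ — which reduces to showing a suitable linear-algebra non-degeneracy, most cleanly by noting that $E_1(\alpha)=E_2(\alpha)=E_3(\alpha)=0$ would force $\alpha=\sum_i E_i(\alpha)=0$ from $\sum e_i=1$, hence applied to $\alpha$ gives $\alpha = (\sum_i E_i)(\alpha)$, contradiction, and refining this to rule out $\Delta$ missing two of the three indices when $\alpha\neq0$; and (ii) getting the scalar arithmetic over $\F_p$ exactly right in the $\alpha\in\F_q$ subcase and in the division-by-$\sqrt{n^\ast}$ step (legitimate because $\sqrt{n^\ast}\neq0$ in $\overline{\F_q}$, and in fact lies in $\F_q$ or a quadratic extension, but the vanishing conditions are $\F_q$-rational after clearing it). Everything else is substitution into Theorem~\ref{thm-main} and counting degrees.
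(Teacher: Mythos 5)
Your proposal is correct and follows essentially the same route as the paper: evaluate $E_1,E_2,E_3$ at $\alpha$ via formulas (\ref{eqn-lar}) and (\ref{eqn-res}), observe that $E_2(\alpha)$ and $E_3(\alpha)$ cannot vanish simultaneously for $\alpha\notin\F_q$ (by adding/subtracting the two vanishing conditions), and then read off $\Delta$, $M_\alpha$ and $k$ from Theorem \ref{thm-main}. The paper's proof does exactly this for Case (I) and leaves Case (II) as ``similarly,'' which you have filled in correctly.
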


\begin{proof}
(I) For $2 \nmid q, \alpha \in \F^{\times}_Q,$ formula (\ref{eqn-lar}) gives that
\begin{eqnarray*}
E_1(\alpha)=0 & \Leftrightarrow & \tr(\alpha)=0 \\
E_2(\alpha)=0 & \Leftrightarrow & n\alpha-\tr(\alpha)-\mu \sqrt{n^{\ast}}\delta=0 \\
E_3(\alpha)=0 & \Leftrightarrow & n\alpha-\tr(\alpha)+\mu \sqrt{n^{\ast}}\delta=0 \ (\mu=1 \mbox{or} -1).
\end{eqnarray*}
If $\alpha \in \F^{\times}_q,$
 then $\tr(\alpha)=n\alpha \neq 0,
 \delta=\sum^{n-1}\limits_{r=0}(\frac{r}{n})\alpha^{q^r}=\alpha \sum^{n-1}\limits_{r=0}(\frac{r}{n})=0.$
Therefore $E_1(\alpha) \neq 0$ and $E_2(\alpha)=E_3(\alpha)=0.$ By Theorem \ref{thm-main},
$M_{\alpha}(x)=P_1(x)=x^q-x$ and $k=n-1.$ If $\alpha \in \F_Q \backslash \F_q,$ then $n \alpha -\tr(\alpha) \neq 0
 ( \mbox{otherwise} \ \alpha=\frac{1}{n}\tr(\alpha) \in \F_q).$ If $E_2(\alpha)=E_3(\alpha)=0,$
then $n\alpha-\tr(\alpha)=\sqrt{n^{\ast}}\delta=-\sqrt{n^{\ast}}\delta$ which implies that $\delta =0$
and $n \alpha =\tr(\alpha),$ contradiction.

Therefore at most one of $E_2(\alpha)$ and $E_3(\alpha)$
is zero. And $E_i(\alpha)=0$ for $i=2$ or $3$ if and only if
$\sqrt{n^{\ast}}\delta \in \{ \pm (n\alpha-\tr(\alpha))\}.$ When $\tr(\alpha)=0,$ then $E_1(\alpha)=0.$
If $\sqrt{n^{\ast}}\delta \notin \{ \pm n\alpha \},$ then $E_2(\alpha) \neq 0 \neq E_3(\alpha)$
and $M_{\alpha}(x)=\varphi(p_2(x)p_3(x))=\varphi(\sum^{n-1}\limits_{i=0}x^i)=\sum^{n-1}\limits_{i=0}x^{q^i},k=n-(n-1)=1.$
If $\sqrt{n^{\ast}}\delta =n \alpha$ or $-n\alpha$
(namely, $\delta=\sqrt{n^{\ast}} \alpha$ or $-\sqrt{n^{\ast}} \alpha$).
Then $M_{\alpha}(x)=P_i(x) $
where $i=2$ or $ 3$ and $k=n-l=l+1.$ When $\tr(\alpha) \neq 0, E_1(\alpha) \neq 0.$ If
$\sqrt{n^{\ast}}\delta \notin \{ \pm (n\alpha-\tr(\alpha))\},$ then $E_2(\alpha) \neq 0 \neq E_3(\alpha)$
and $M_{\alpha}(x)=\varphi(x^n -1)=x^{q^n}-x, k=0.$
Otherwise $M_{\alpha}(x)=\varphi(p_i(x)p_1(x))=P_i(x) \otimes(x^q -x)=P_i(x^q -x)=P_i(x)^q -P_i(x)$
for $i=2$ or $3.$ This completes the proof of Case (I). Similarly we can prove Theorem \ref{thm-exam2} for Case (II)
by using Theorem \ref{thm-main} and formula (\ref{eqn-res}).
\end{proof}

\begin{example}(General case)
Let $p$ and $n$ be distinct prime numbers, $n \geq 3, q=p^m, Q=q^n.$
Let $f$ be the order of $q$ in the multiplicative group $\gZ^{\times}_n,$ then $n-1=ef$
and there exists $g \in \gZ^{\times}_n$ such that $\gZ^{\times}_n=\langle g \rangle$
and $q=g^e \in \gZ^{\times}_n. \  C=\langle q \rangle $ is a subgroup of $\gZ^{\times}_n$
and the coset of $C$ in $\gZ^{\times}_n$ are
$$
C_{\lambda}=g^{\lambda}C=\{ g^{\lambda+ie}: 0 \leq i \leq f-1\} \quad (0 \leq \lambda \leq e-1).
$$

Let $\zeta$ be an $n$-th primitive root of $1, \F_q(\zeta)=\F_{q^f}.$ Then
\begin{equation}
x^n -1 = p_{\ast}(x)p_0(x) \cdots p_{e-1}(x)
\end{equation}
where $p_{\ast}(x)=x-1$ and for $0 \leq \lambda \leq e-1, p_{\lambda}=\sum\limits_{a \in C_{\lambda}}(x-\zeta^a)$
is an irreducible polynomial in $\F_q[x].$ Therefore

\begin{equation}
\varepsilon_{\ast}=1, \varepsilon_{\lambda}(x)=\sum_{a \in C_{\lambda}}x^a (\bmod x^n -1) \ (0 \leq \lambda \leq e-1).
\end{equation}

\end{example}

Let $\varepsilon_{\lambda}=\varepsilon_{\lambda}(\zeta)=\sum\limits_{a \in C_{\lambda}} \zeta^a (0 \leq \lambda \leq e-1).$
We know that $\varepsilon_{\lambda} \in \F_q$ is the Gauss periods of order $e$ and for $\alpha_j=\zeta^{g^j},$
$$
\varepsilon_{\lambda}(\alpha_j)=\sum_{a \in C_{\lambda}}\zeta^{ag^j}=\varepsilon_{\lambda+j} \ (\lambda, j \in \gZ_e).
$$

Therefore
$$
\mM=\left(
  \begin{array}{ccccc}
   1 & 1 & 1 & \cdots & 1 \\
    f & \varepsilon_0 & \varepsilon_1 & \cdots & \varepsilon_{e-1} \\
    f & \varepsilon_1 & \varepsilon_2 & \cdots & \varepsilon_0 \\
     \vdots & \vdots & \vdots & & \vdots \\
       f &\varepsilon_{e-1} &\varepsilon_0 & \cdots & \varepsilon_{e-2} \\
        \end{array}
        \right).
$$

By using the equalities
\begin{equation}
\sum^{e-1}_{\lambda=0}\varepsilon_{\lambda}=\sum^{e-1}_{\lambda=0}\sum_{a \in C_{\lambda}}\zeta^a=\sum^{n-1}_{a=0}\zeta^a=-1
\end{equation}
and
\begin{eqnarray*}
\sum^{e-1}_{\lambda=0}\varepsilon_{\lambda} \varepsilon_{\lambda+j}&=&\sum^{e-1}_{\lambda=0}\sum_{a,b \in \mathrm{C}}\zeta^{ag^{\lambda}+bg^{\lambda+j}}\\
&=&\sum^{e-1}_{\lambda=0}\sum_{a \in \mathrm{C}}\sum_{d \in \mathrm{C}}\zeta^{ag^{\lambda}(1+dg^j)} \ (\mbox{let} \  d=ba^{-1}) \\
&=&\left \{
\begin{array}{ll}
n-f , \  \mbox{if} \ -1 \in \mathrm{C_j}  \ (\mbox{namely, if} \  j=0 \ \mbox{for even} \  f\  \mbox{and} \  j=\frac{e}{2} \ \mbox{for odd} \ f)\\
-f, \ \ \mbox{otherwise}.
\end{array}
\right.
\end{eqnarray*}
we get
$$
\mM^{-1}= \frac{1}{n}\left(
\begin{array}{ccccc}
 1 & 1 & 1 & \cdots & 1 \\
  f & \varepsilon_c & \varepsilon_{c+1} & \cdots & \varepsilon_{c-1} \\
  f & \varepsilon_{c+1} & \varepsilon_{c+2} & \cdots & \varepsilon_c \\
  \vdots & \vdots & \vdots & & \vdots \\
  f &\varepsilon_{c-1} &\varepsilon_c & \cdots & \varepsilon_{c-2} \\
 \end{array}
 \right).
$$
where $c \equiv \frac{ef}{2}(\bmod e).$ Namely, $c=0$ for even $f$ and $c=\frac{f}{2}$
for odd $f.$ By Theorem \ref{thm-ZF}, we have for $\alpha \in \F^{\times}_Q \ (Q=q^n),$
$$\left(
          \begin{array}{c}
          e_{\ast}(x) \\
          e_0(x)\\
          \vdots \\
           e_{e-1}(x) \\
            \end{array}
            \right)= \mM^{-1} \left(
          \begin{array}{c}
          1 \\
          \varepsilon_0(x)\\
          \vdots \\
           \varepsilon_{e-1}(x) \\
            \end{array}
            \right)=\frac{1}{n}\left(
          \begin{array}{c}
         \sum^{n-1}\limits_{i=0}x^i \\
          f+\sum^{e-1}\limits_{\lambda=0}\varepsilon_{\lambda}(x)\varepsilon_{\lambda+c} \\
          \vdots \\
           f+\sum^{e-1}\limits_{\lambda=0}\varepsilon_{\lambda}(x)\varepsilon_{\lambda+c-1} \\
            \end{array}
            \right) $$
Namely, $E_{\ast}(\alpha)=\sum^{n-1}\limits_{i=0}\alpha^{q^i}=\tr(\alpha)$ and for $0 \leq i \leq e-1,$
\begin{equation}
E_i(\alpha)=\frac{1}{n}(f\alpha+\sum^{e-1}_{\lambda=0}\varepsilon_{\lambda+c+i}\sum_{a \in C_{\lambda}}\alpha^{q^a}).
\end{equation}
From these computation and Theorem \ref{thm-main}, we get the following result.

\begin{theorem}\label{thm-exam3}
Let $p$ and $n$ be distinct prime numbers, $n \geq 3, q=p^m, Q=q^n.$
Let $f$ be the order of $q$ in the multiplicative group $\gZ^{\times}_n, n-1=ef$
Then there exists $g \in \gZ^{\times}_n$ such that $\gZ^{\times}_n=\langle g \rangle$
and $q=g^e \in \gZ^{\times}_n.$ Let
$$
C=\langle q \rangle =\{ q^{i}: 0 \leq i \leq f-1\} =\{ g^{ie}: 0 \leq i \leq f-1\}$$
and
$C_{\lambda}=g^{\lambda}C \ (0 \leq \lambda \leq e-1)$ are the cyclotomic cosets of $C$ in $\gZ^{\times}_n.$
Let $\zeta$ be an $n$-th primitive root of $1, \F_q(\zeta)=\F_{q^f}.$ Then
$x^n-1$ is decomposed in $\F_q[x]$ by
$$
x^n -1 = p_{\ast}(x)p_0(x) \cdots p_{e-1}(x)
$$
where $p_{\ast}(x)=x-1$ and for $0 \leq \lambda \leq e-1, p_{\lambda}=\sum\limits_{a \in C_{\lambda}}(x-\zeta^a)$
is an irreducible polynomial in $\F_q[x],$ and $P_{\lambda}=\varphi(p_{\lambda}).$ Let
\begin{eqnarray*}
\varepsilon_{\lambda}&=&\sum_{a \in C_\lambda}\zeta^{a}\\
c &=&\left \{
\begin{array}{ll}
0 , \  \mbox{if} \  f \  \mbox{is even}\\
\frac{e}{2}, \ \mbox{if} \ f \ \mbox{is odd}.
\end{array}
\right.
\end{eqnarray*}
For $\alpha \in \F^{\times}_Q,$ let
\begin{equation}
\mathcal{S}=\{i:0 \leq i \leq e-1, \ \sum^{e-1}_{\lambda=0}\varepsilon_{\lambda+i+c}\sum_{a \in C_{\lambda}}\alpha^{q^a} \neq -fa\}
\end{equation}
$M_{\alpha}(x)$ be the minimum $q$-polynomial of $\alpha$ and $\alpha$ be a $k$-normal element for $\F_Q / \F_q.$

(I) When $\alpha \in \F^{\times}_q,$ then $M_{\alpha}(x)=x^q -x$ and $k=n-1.$

(II) When $\alpha \in \F_Q / \F_q$ and $\tr(\alpha)=0,$ then $M_{\alpha}(x)=\bigotimes\limits_{\lambda \in \mathcal{S}}P_{\lambda}(x)$
and $k=n-f |\mathcal{S}|=f(e-|\mathcal{S}|)+1.$

(III) When $\alpha \in \F_Q / \F_q$ and $\tr(\alpha) \neq 0,$
 then $M_{\alpha}(x)=\bigotimes\limits_{\lambda \in \mathcal{S}}P_{\lambda}(x^q-x)$
and $k=n-1-f |\mathcal{S}|=f(e-|\mathcal{S}|).$
\end{theorem}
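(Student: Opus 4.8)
The plan is to read the conclusion off from Theorem \ref{thm-main} applied to the decomposition $x^n-1=p_{\ast}(x)p_0(x)\cdots p_{e-1}(x)$ displayed above, so that the whole argument reduces to identifying the index set $\Delta=\Delta(\alpha)=\{\,i:E_i(\alpha)\neq0\,\}$ and then rewriting $\bigotimes_{i\in\Delta}P_i(x)$ and the number $n-\sum_{i\in\Delta}\deg p_i(x)$ in the stated shape. The inputs I would quote are the two formulas obtained just before the theorem: $E_{\ast}(\alpha)=\sum_{i=0}^{n-1}\alpha^{q^i}=\tr(\alpha)$, and, for $0\le i\le e-1$,
$$E_i(\alpha)=\frac1n\Bigl(f\alpha+\sum_{\lambda=0}^{e-1}\varepsilon_{\lambda+c+i}\sum_{a\in C_\lambda}\alpha^{q^a}\Bigr).$$
Since $n$ is a prime distinct from $p$ it is invertible in $\F_q$, so $E_i(\alpha)\neq0$ precisely when $f\alpha+\sum_{\lambda=0}^{e-1}\varepsilon_{\lambda+c+i}\sum_{a\in C_\lambda}\alpha^{q^a}\neq0$, which is exactly the condition defining $\mathcal{S}$; hence $\mathcal{S}=\{\,i:0\le i\le e-1,\ E_i(\alpha)\neq0\,\}$, and therefore $\Delta(\alpha)=\mathcal{S}$ when $\tr(\alpha)=0$ while $\Delta(\alpha)=\{\ast\}\cup\mathcal{S}$ when $\tr(\alpha)\neq0$.

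With this in hand I would split into the three cases, which are exhaustive because every $\alpha\in\F^{\times}_Q$ either lies in $\F^{\times}_q$ or not, and in the latter case $\tr(\alpha)$ is either zero or not. In case (I), $\alpha\in\F^{\times}_q$: then $\alpha^{q^a}=\alpha$ for all $a$, so $\sum_{a\in C_\lambda}\alpha^{q^a}=f\alpha$, and using the identity $\sum_{\lambda=0}^{e-1}\varepsilon_\lambda=-1$ established above one finds $E_i(\alpha)=\frac{f\alpha}{n}\bigl(1+\sum_{\lambda=0}^{e-1}\varepsilon_{\lambda+c+i}\bigr)=0$ for every $i$, whereas $\tr(\alpha)=n\alpha\neq0$; thus $\Delta=\{\ast\}$, $M_{\alpha}(x)=P_{\ast}(x)=\varphi(x-1)=x^q-x$, and $k=n-1$. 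In case (II), $\tr(\alpha)=0$ (which already forces $\alpha\notin\F_q$, since $\alpha\in\F^{\times}_q$ would give $\tr(\alpha)=n\alpha\neq0$): here $\Delta=\mathcal{S}$, so $M_{\alpha}(x)=\bigotimes_{\lambda\in\mathcal{S}}P_\lambda(x)$, and since each $\deg p_\lambda=f$ and $n=1+ef$ one obtains $k=n-f|\mathcal{S}|=f(e-|\mathcal{S}|)+1$. In case (III), $\alpha\in\F_Q\setminus\F_q$ and $\tr(\alpha)\neq0$: now $\Delta=\{\ast\}\cup\mathcal{S}$, and since composition of $q$-polynomials is commutative (being carried by $\varphi$ onto multiplication in $\F_q[x]$) I may move the factor $P_{\ast}(x)=x^q-x$ to the outside, getting $M_{\alpha}(x)=\bigl(\bigotimes_{\lambda\in\mathcal{S}}P_\lambda\bigr)(x^q-x)$, with $k=n-1-f|\mathcal{S}|=f(e-|\mathcal{S}|)$.

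The argument is essentially bookkeeping, and the step that needs genuine care is case (I): one must verify that \emph{all} of $E_0(\alpha),\dots,E_{e-1}(\alpha)$ vanish — which is exactly where the Gauss-period relation $\sum_{\lambda}\varepsilon_\lambda=-1$ enters, together with the observation that $\lambda\mapsto\lambda+c+i$ merely permutes $\gZ_e$ so that $\sum_{\lambda}\varepsilon_{\lambda+c+i}=-1$ independently of $i$ — and simultaneously that $\tr(\alpha)=n\alpha\neq0$, so that $\Delta$ collapses to the single index $\ast$ and $\alpha$ is $(n-1)$-normal rather than of smaller normality. Everything else — the degree count through $n=1+ef$ and $\deg p_\lambda=f$, and pulling $x^q-x$ out of the composition in case (III) — is routine.
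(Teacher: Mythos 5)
Your proposal is correct and follows essentially the same route as the paper: apply Theorem \ref{thm-main} to the decomposition $x^n-1=p_{\ast}(x)p_0(x)\cdots p_{e-1}(x)$, identify $\Delta(\alpha)$ with $\mathcal{S}$ (resp. $\{\ast\}\cup\mathcal{S}$) via the explicit formulas for $E_{\ast}(\alpha)$ and $E_i(\alpha)$, and use $\sum_{\lambda}\varepsilon_{\lambda}=-1$ to kill all $E_i(\alpha)$ when $\alpha\in\F^{\times}_q$. Your write-up is in fact slightly more careful than the paper's (which contains typos at exactly these spots), but the argument is the same.
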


\begin{proof}
When $\alpha \in \F^{\times}_q, E_{\ast}=\tr(\alpha)=n\alpha \neq 0$ and for $ 0 \leq i \leq e-1,$
by (16)
$$
E_i(\alpha)=\frac{1}{n}(f \alpha +\sum^{e-1}_{\lambda=0}\varepsilon_{\lambda+c+i}\sum_{a \in C_{\lambda}})
=\frac{1}{n}(f \alpha +f \alpha \sum^{e-1}_{\lambda=0}\varepsilon_{\lambda}=0.
$$
Therefore $M_{\alpha}(x)=P_1(x)=x^q-x$ and $k=n-1.$

When $\alpha \in \F_Q / \F_q, E_{\ast}(\alpha)=\tr(\alpha) $ and for $ 1 \leq i \leq e-1,$
by (16) and (17),
$$E_i(\alpha) \neq 0  \Longleftrightarrow i \in \mathcal{S}.$$
Therefore $M_{\alpha}(x)=\bigotimes\limits_{\lambda \in \mathcal{S}}P_{\lambda}(x)$ and
$k=n-f |\mathcal{S}|$ if $ \tr(\alpha)=0$ and
$M_{\alpha}(x)=(\bigotimes\limits_{\lambda \in \mathcal{S}}P_{\lambda}(x))$
$\bigotimes P_1(x)
=\bigotimes\limits_{\lambda \in \mathcal{S}}P_{\lambda}(x^q -x), k=n-f |\mathcal{S}|-1=f(e-|\mathcal{S}|)$
if $\tr(\alpha) \neq 0.$
\end{proof}

\begin{remark}
(1) $\varepsilon_i=\sum\limits_{a \in C_1}\zeta^a  \ ( 0 \leq i \leq e-1)$ are Gauss periods
of order $e$ over $\F_n,$ but valued in $\F_q.$ They can be computed as usual Gauss periods
valued in complex number field $\C$ for small $e$ and semiprimitive case.
For $e=1$ and $2,$ we get Theorem \ref{thm-exam1} and \ref{thm-exam2} respectively.

(2) For $q=2,(e,n)=(3,7),(5,31),(7,127)$ or $q=4,e=3,n=7. \varepsilon_i  \in \F_2$
and by (16),

$$
\sum^{e-1}_{i=0}\varepsilon_{i}\varepsilon_{i+j} =  \left \{
\begin{array}{ll}
1, & \mbox{if} \ j=0, \\
0 & \mbox{if} \ 1 \leq j \leq e-1,
\end{array}
\right.
$$
which means that the circulant matrix over $\F_2,$
$$
\mM=\left(
  \begin{array}{cccc}
   \varepsilon_0  & \varepsilon_1 &\cdots & \varepsilon_{e-1} \\
    \varepsilon_1 & \varepsilon_2 &  \cdots & \varepsilon_{0} \\
   \vdots & \vdots &  & \vdots \\
      \varepsilon_{e-1} &\varepsilon_0 & \cdots & \varepsilon_{e-2} \\
        \end{array}
        \right).
$$
is orthogonal. Jungnickel et al. \cite{JBG} obtained a formula on the
number of orthogonal circulant $ e \times 2$ matrix over $\F_q.$ From this
formula we know that for $q=2, (e,n)=(3,7),(5,31),(7,127)$ or $q=4,(e,n)=(3,7)$
$ (\varepsilon_0,\varepsilon_1,\ldots ,\varepsilon_{e-1})$ is a cyclic shift of
$(1,0,\ldots,0).$ Let $\varepsilon_t =1,$
then $\sum^{e-1}\limits_{\lambda=0}\varepsilon_{\lambda+i+c}\sum\limits_{a \in C_{\lambda}}\alpha^{q^a}
\sum\limits_{a \in C_{\lambda^{\prime}}}\alpha^{q^a}$ where $\lambda^{\prime}=t+i+c.$ Therefore let
$$\mathcal{S^{\prime}}=\{ \lambda^{\prime}:
 0 \leq \lambda^{\prime} \leq e-1, \sum_{a \in C_{\lambda^{\prime}}}\alpha^{q^a}\} \neq -fa.$$
Then $\mathcal{S^{\prime}}=\mathcal{S}+t=\{s+t, s \in \mathcal{S}\},|\mathcal{S}|=|\mathcal{S^{\prime}}| ,$
and Theorem \ref{thm-exam3} can be stated in term of $\mathcal{S^{\prime}}$ in stead of $\mathcal{S}.$
\end{remark}

\section*{Acknowledgements}

 K.Feng's research was supported by the Natural Science Foundation of China under Grant No:11571107 and 11471178.

\end{document}